\documentclass[a4paper,11pt,oneside,reqno]{amsart}
\usepackage{amsmath,amssymb,amsthm,mathtools}
\usepackage[pagewise]{lineno}
\usepackage{graphicx}
\usepackage{color}
\mathtoolsset{showonlyrefs}
\usepackage{bm}

\usepackage{subfigure}
\usepackage{verbatim}
\usepackage{wrapfig}
\usepackage{ascmac}
\usepackage{cases}
\usepackage{amsthm}
\usepackage{autobreak}

\usepackage{comment}
\usepackage {cancel}
\usepackage[normalem]{ulem}

\usepackage{microtype}

  \makeatletter
    
    \@addtoreset{equation}{section}
  \makeatother
\newtheorem{theorem}{Theorem}[section]

\newtheorem{lemma}[theorem]{Lemma}

\newcounter{c}
\newcounter{d}
\newcounter{b}

\newcommand{\Na}{\mathbb{N}} 
\newcommand{\R}{\mathbb{R}} 

\newcommand{\ind}[1]{\mathbf{1}_{#1}}

\begin{document}
\raggedbottom
\title[Persistence of the reaction term]{Persistence of the reaction term in an unbalanced fast reaction limit}
\author{Yuki Tsukamoto}
\date{}

\address{Tokyo University of Science,
	162-8601, Tokyo, Japan}
\email{tsukamoto-yuki@rs.tus.ac.jp} 

\begin{abstract}
We study a fast reaction limit for a two-component reaction--diffusion
system in which one component diffuses and the other does not. In the
regime considered here, the nondiffusing component converges to zero as
the reaction rate tends to infinity, while the reaction term has a
nonzero limit. As a result, the limiting equation for the diffusing
component retains a time-integral term. Our results cover both homogeneous 
Dirichlet and Neumann boundary conditions. We prove convergence of the 
diffusing component and determine the limiting behavior of the reaction 
term and the rescaled nondiffusing component. The proof combines monotonicity with
respect to the reaction rate with barrier estimates adapted to the
time-integral term. Under an additional separation condition on the
initial data, we also prove uniqueness of the limiting initial-boundary
value problem.
\end{abstract}
\maketitle

\section{Introduction}\label{sec:introduction}

Fast reaction limits arise in reaction--diffusion systems in which the
reaction takes place on a much shorter time scale than diffusion. A typical
two-component system has the form
\begin{equation}\label{eq:general-fast-reaction-intro}
\begin{cases}
  \partial_t u_k
  =d_1\Delta u_k-k f(u_k,v_k),
  \\[1mm]
  \partial_t v_k
  =d_2\Delta v_k-k g(u_k,v_k),
\end{cases}
\end{equation}
where $d_1,d_2\ge0$ are diffusion coefficients, $f$ and $g$ describe the
reaction terms, and $k>0$ is the reaction-rate parameter. 
The fast reaction limit concerns the behavior of the solutions as
$k\to\infty$. A characteristic feature of many such problems is the
appearance of spatial segregation between the reacting components.
The resulting interface may move or remain stationary, and in some cases
it may disappear instantaneously.

In this paper, we consider the case in which the first component diffuses
and the second one does not. One example motivating such a structure is
the model of the penetration of radiolabeled antibodies into tumorous tissue
studied by Hilhorst, van der Hout and Peletier
\cite{HilhorstEtAl1996FastReactionLimit}, in which mobile antibodies bind
to immobile antigens in the tissue. The absence of diffusion in the second
component is particularly important here, since its equation can be solved
explicitly at each spatial point. As will be seen below, this pointwise 
representation leads to a time-integral term in the fast reaction limit.

In a basic class of irreversible fast reaction systems, the fast reaction
terms in the two equations are proportional.
After normalization, a representative balanced system
has the form
\[
  \partial_tu_k=d_u\Delta u_k-kF(u_k,v_k),
  \qquad
  \partial_tv_k=d_v\Delta v_k-kF(u_k,v_k),
\]
where $d_u>0$ and $d_v\ge0$. Subtracting the two equations cancels the
reaction terms and gives an equation for the difference of the two
components. Hilhorst, van der Hout and Peletier studied the fast reaction
limit for a system with one diffusive and one nondiffusive component
\cite{HilhorstEtAl1996FastReactionLimit}, and subsequently treated general
monotone reaction terms
\cite{HilhorstVanDerHoutPeletier1997GeneralMonotoneFastReaction}.
Eymard, Hilhorst, van der Hout and Peletier gave a reaction-diffusion
approximation of a one-phase Stefan problem
\cite{EymardEtAl2001ReactionDiffusionOnePhaseStefan}. On unbounded domains,
Crooks and Hilhorst considered systems with one or two diffusing components
and characterized their self-similar fast reaction limits
\cite{CrooksHilhorst2016SelfsimilarFastreactionLimits}.

Related singular limits arise in strongly competitive systems. Dancer,
Hilhorst, Mimura and Peletier proved convergence of a competition-diffusion
system to a Stefan problem with zero latent heat
\cite{DancerEtAl1999SpatialSegregationLimit}. Crooks, Dancer, Hilhorst,
Mimura and Ninomiya studied spatial segregation under inhomogeneous
Dirichlet boundary conditions, with a free boundary problem arising in the
limit
\cite{CrooksEtAl2004SpatialSegregationLimit}.
Conti, Terracini and Verzini obtained asymptotic estimates and regularity
properties for the limiting configurations of strongly competitive systems
\cite{ContiEtAl2005AsymptoticEstimatesSpatial}.
Hilhorst, Iida, Mimura and Ninomiya proved $L^p$ relative compactness
for a class of $2m$-component competition-diffusion systems and obtained
a Stefan limit in the four-component case
\cite{HilhorstEtAl2008RelativeCompactnessLp}.

Fast reaction limits have also been studied for a variety of other
reaction and diffusion structures. Bothe and Hilhorst considered a fast
reversible reaction system and obtained a nonlinear diffusion limit
\cite{BotheHilhorst2003ReactionDiffusionSystem}, while Bouillard, Eymard,
Henry, Herbin and Hilhorst studied fast precipitation and dissolution in
a porous medium
\cite{BouillardEtAl2009FastPrecipitationDissolution}.
Stephan established EDP-convergence for a linear reaction-diffusion
system with fast reversible reaction
\cite{Stephan2021EDPconvergenceLinearReactiondiffusion}.
Murakawa developed a general framework for singular limits of
reaction-diffusion systems
\cite{Murakawa2021FastReactionLimit}, and Crooks and Du studied
fast-reaction limits for systems with nonlinear diffusion
\cite{CrooksDu2024FastreactionLimitReaction}.
For a nonmonotone reaction function, Perthame and Skrzeczkowski
characterized oscillations of a nondiffusing component in terms of its
Young measure
\cite{PerthameSkrzeczkowski2023FastReactionLimit}.
In a related study of fast reaction and forward-backward diffusion,
Skrzeczkowski used a Radon--Nikodym approach to obtain a pointwise
characterization of the Young measure
\cite{Skrzeczkowski2022FastReactionLimit}.
These works involve reaction or diffusion structures different from the
monomial system considered below.

For monomial reaction terms that need not be proportional, Iida,
Monobe, Murakawa and Ninomiya
\cite{IidaEtAl2017VanishingMovingImmovable} considered
\[
\begin{cases}
\partial_tu_k=\Delta u_k-ku_k^{m_1}v_k^{m_2},\\
\partial_tv_k=-ku_k^{m_3}v_k^{m_4},
\end{cases}
\qquad m_1,m_2,m_3,m_4\ge1,
\]
with segregated initial data and homogeneous Neumann boundary conditions.
They studied representative one-parameter families in which three
exponents are fixed equal to one. Among the regimes treated there,
$(m,1,1,1)$ gives instantaneous interface disappearance for $m>3$;
$(1,m,1,1)$ with $m\ge1$ and $(1,1,1,m)$ with $1\le m<2$ give moving
interfaces governed by one-phase Stefan problems; and $(1,1,m,1)$ with
$m>1$ gives a stationary interface.
Hayashi derived vanishing, moving, and stationary interfaces directly from
two-species exclusion processes with unbalanced reaction rates in the
corresponding cases with integer exponents
\cite{Hayashi2021SpatialsegregationLimitExclusion}.

For the vanishing-interface family $(m,1,1,1)$, instantaneous interface
disappearance was extended from $m>3$ to $m>2$ in
\cite{Tsukamoto2025InterfaceDisappearanceFast}, and to every $m>1$ under
Dirichlet and Neumann boundary conditions in
\cite{Tsukamoto2026VanishingInterfacesAsymmetric}.
The stationary family $(1,1,m_3,m_4)$, with $m_3,m_4\ge1$ and either
$m_3>1$ or $m_4\ge2$, was studied under additional assumptions in
\cite{Tsukamoto2026FormationStationaryInterfaces}.

The representative regimes above, together with their related extensions,
have different interface dynamics but share a common feature. If
\[
\Omega_u(t):=\{x\in\Omega:u(x,t)>0\},
\]
then the limiting diffusive component satisfies
\[
\partial_tu=\Delta u
\qquad\text{in }\Omega_u(t).
\]
When the interface disappears, $\Omega_u(t)=\Omega$ for every $t>0$;
in the stationary case, $\Omega_u(t)$ is independent of $t$; and in the
Stefan case, $\Omega_u(t)$ evolves with time. In the latter case, the
effect of the reaction appears in the interfacial condition rather than
as a reaction term inside $\Omega_u(t)$. Thus, in these works, the main
difference lies in the evolution of the positive phase, while the equation
in its interior remains the heat equation
\cite{IidaEtAl2017VanishingMovingImmovable,
Tsukamoto2025InterfaceDisappearanceFast,
Tsukamoto2026FormationStationaryInterfaces,
Tsukamoto2026VanishingInterfacesAsymmetric}.

The present paper gives a different limiting equation within the same
four-exponent framework. We impose
\[
  m_2=m_4-1,
  \qquad
  m_1-m_3\ge1,
\]
and write
\[
  m_1=p+\alpha,\qquad m_2=q,\qquad
  m_3=p,\qquad m_4=q+1,
  \qquad p,q\ge1,\quad \alpha\ge1.
\]
For this family, we prove that the limiting diffusive component is positive
throughout the interior of the domain for every positive time. Nevertheless,
the fast reaction leaves a nonzero term in the limiting equation. 
More precisely, although $v_k\to0$ locally in
$\Omega\times(0,\infty)$,
\[
  kv_k^q\longrightarrow
  \ind{\{v_0>0\}}
  \left(q\int_0^t u(x,s)^p\,ds\right)^{-1}.
\]
Thus, although the nondiffusive component vanishes, the reaction term in
the $u_k$-equation does not vanish in the limit. The limiting equation for $u$ 
therefore contains a term involving the
time integral of $u^p$. This differs
from the representative families described above, where the limiting
equation in the positive phase is the heat equation.

Let \(N\in\Na\), let \(\Omega\subset\R^N\) be a bounded connected domain with
\(C^2\) boundary, and let
\[
  p,q\ge1,
  \qquad
  \alpha\ge1.
\]
Assume that
\begin{equation}\label{eq:initial-assumptions}
  u_0,v_0\in C(\overline\Omega),
  \qquad
  u_0\ge0,
  \quad
  v_0\ge0,
  \quad
  u_0v_0\equiv0,
  \quad
  u_0\not\equiv0,
  \quad
  v_0\not\equiv0.
\end{equation}
For each \(k>0\), let \((u_k,v_k)\) be the unique global nonnegative 
solution of
\begin{equation}\label{eq:Pk-intro}
\begin{cases}
  \partial_t u_k
  =\Delta u_k-k u_k^{p+\alpha}v_k^q
  &\text{in }\Omega\times(0,\infty),\\
  \partial_t v_k
  =-k u_k^p v_k^{q+1}
  &\text{in }\Omega\times(0,\infty),\\
  u_k(\cdot,0)=u_0,\qquad v_k(\cdot,0)=v_0
  &\text{in }\Omega.
\end{cases}
\end{equation}
For the diffusive component, fix one of the following boundary conditions and
use the same choice for every \(k>0\):
\begin{align}
  u_k=0
  &\quad\text{on }\partial\Omega\times(0,\infty)
  &&\text{(Dirichlet case)},
  \label{eq:dirichlet-bc}\\
  \partial_\nu u_k=0
  &\quad\text{on }\partial\Omega\times(0,\infty)
  &&\text{(Neumann case)}.
  \label{eq:neumann-bc}
\end{align}
Here \(\nu\) denotes the outward unit normal to \(\partial\Omega\).
Since the second equation contains no spatial derivatives, no boundary
condition is imposed on \(v_k\). In the Dirichlet case, assume
in addition that
\[
  u_0=0
  \qquad\text{on }\partial\Omega.
\]
Set
\begin{equation}\label{eq:D-definition}
D:= \{x\in\Omega:v_0(x)>0\}.
\end{equation}
Since $v_0$ is continuous, $D$ is open.

We now state the main result.

\begin{theorem}
\label{thm:memory-limit}
Under the above assumptions, the family $\{u_k\}_{k>0}$ is nonincreasing
with respect to $k$.
As $k\to\infty$, $u_k$ converges to a function
\[
u\in C(\overline\Omega\times(0,\infty))
\]
uniformly on $\overline\Omega\times[\tau,T]$ for every
$0<\tau<T<\infty$.
Moreover,
\[
u(x,t)>0
\qquad\text{for every }(x,t)\in\Omega\times(0,\infty).
\]

Define
\begin{equation}\label{eq:A-limit-definition}
A(x,t):=\int_0^t u(x,s)^p\,ds.
\end{equation}
Then $A(x,t)>0$ for every $x\in\Omega$ and $t>0$. Moreover,
\[
u\in W^{2,1}_{s,\mathrm{loc}}(\Omega\times(0,\infty))
\qquad(1\le s<\infty),
\]
and
\begin{equation}\label{eq:limit-equation}
\partial_tu
=
\Delta u-\ind{D}\frac{u^{p+\alpha}}{qA}
\quad\text{a.e. in }\Omega\times(0,\infty).
\end{equation}

Set
\begin{equation}\label{eq:Rk-intro}
R_k:=kv_k^q.
\end{equation}
Then, for every $1\le s<\infty$,
\begin{align}
R_k
&\longrightarrow R:=\ind{D}\frac1{qA}
&&\text{in }L^s_{\mathrm{loc}}(\Omega\times(0,\infty)),
\label{eq:Rk-limit-main}\\
k^{1/q}v_k=R_k^{1/q}
&\longrightarrow\ind{D}(qA)^{-1/q}
&&\text{in }L^s_{\mathrm{loc}}(\Omega\times(0,\infty)).
\label{eq:scaled-v-limit-main}
\end{align}
The convergence in \eqref{eq:Rk-limit-main} also holds almost everywhere.
In addition,
\begin{equation}\label{eq:limit-initial-trace}
u(\cdot,t)\longrightarrow u_0
\quad\text{in }L^2(\Omega)
\qquad\text{as }t\downarrow0.
\end{equation}

In the Dirichlet case,
\begin{equation}\label{eq:dirichlet-limit-boundary}
u=0\quad\text{on }\partial\Omega\times(0,\infty).
\end{equation}
In the Neumann case, for every $0<\tau<T<\infty$,
\begin{equation}\label{eq:neumann-c1-convergence}
u_k\longrightarrow u
\quad\text{in }C([\tau,T];C^1(\overline\Omega)),
\end{equation}
\begin{equation}\label{eq:neumann-limit-boundary}
\partial_\nu u=0\quad\text{on }\partial\Omega\times(0,\infty),
\end{equation}
and
\begin{equation}\label{eq:neumann-global-regularity}
u\in W^{2,1}_s(\Omega\times(\tau,T))
\qquad(1\le s<\infty).
\end{equation}
\end{theorem}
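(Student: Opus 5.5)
The plan rests on the explicit pointwise solution of the $v_k$-equation. Since $\partial_t v_k=-ku_k^pv_k^{q+1}$, we get, pointwise in $x$,
\[
  v_k(x,t)^{-q}=v_0(x)^{-q}+qk\int_0^t u_k(x,s)^p\,ds
  \qquad\text{on }D,
\]
and $v_k\equiv0$ off $D$. Hence, with $A_k:=\int_0^tu_k^p\,ds$,
\[
  R_k=kv_k^q=\ind{D}\Bigl(k^{-1}v_0^{-q}+qA_k\Bigr)^{-1},
\]
so the reaction term in the $u_k$-equation equals $\ind{D}\,u_k^{p+\alpha}/(k^{-1}v_0^{-q}+qA_k)$. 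This is a nonlocal-in-time (memory) equation for $u_k$ alone.

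\textbf{Step 1: monotonicity in $k$.} For $k<k'$, I would show $u_{k'}\le u_k$ by a comparison argument for the memory equation. The nonlinearity $G_k(u,A)=u^{p+\alpha}/(k^{-1}v_0^{-q}+qA)$ is increasing in $k$ and decreasing in $A$, and $A$ is increasing in $u$. Comparison therefore does not follow directly from a scalar maximum principle. Instead I would argue at the level of the full system: $(u,v)$ is quasimonotone, decreasing in $v$ for $u$ and in $u$ for $v$. So the order $u_k\ge u_{k'}$, $v_k\ge v_{k'}$ should be preserved, which gives the monotone type $\partial_t u=\Delta u-f(u,v)$ with $f$ increasing in $v$ and $\partial_tv=-g(u,v)$ with $g$ increasing in $u$. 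For fixed $k$, $v$ is decreasing in $u$ and $u$ is decreasing in $v$. The Ansatz is $\tilde u=u_{k'}$, $\tilde v$ = the pair with reversed order. The cleanest route is to take $w=u_k-u_{k'}$ and show that $w^-$ stays zero using the explicit formula: if $u_{k'}\le u_k$ on $[0,t]$, then $A_{k'}\le A_k$, but $R$ compares in the wrong direction. I would therefore pair $u_{k'}\le u_k$ with $R_{k'}\ge R_k$ (true, since $k$ is larger and $A$ smaller). In the $u$-equation, a larger $R$ gives a larger sink, which closes the comparison. This is a standard Gronwall/maximum-principle argument on the first touching time.

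Monotonicity and $0\le u_k\le \|u_0\|_\infty$ give a pointwise limit $u$.

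\textbf{Step 2: compactness and the limit equation.} Once positivity is known, the bounds follow. The key barrier is a positive lower bound $u_k\ge \underline u>0$ on compact subsets of $\Omega\times(0,\infty)$, uniform in $k$. The reaction satisfies $R_k u_k^{p+\alpha}\le u_k^{p+\alpha}/(qA_k)$. I would construct a subsolution $\underline u$ of
\[
  \partial_t\underline u=\Delta\underline u-\underline u^{p+\alpha}/(q\int_0^t\underline u^p),
\]
and exploit that $A$ is monotone in $u$, so that a lower bound on $u$ bounds the sink by $\underline u^{p+\alpha}/(qA(\underline u))$ uniformly.

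Near $t=0$ on $D$ we have $u_0=0$ initially, so positivity must come from diffusion from $\{u_0>0\}$. I would use a subsolution of the form $\varepsilon\,\phi(x)\,t^{\beta}$ (or a heat-kernel-type profile). For it, $A\sim t^{p\beta+1}$ and the sink behaves like $t^{\beta(p+\alpha)-p\beta-1}=t^{\alpha\beta-1}$. Since $\alpha\ge1$, this is dominated by $\partial_t\underline u\sim t^{\beta-1}$ for small $\varepsilon$. This barrier adapted to the time-integral term is the main obstacle, especially near the interface $\partial D$ and at the boundary in the Dirichlet case. I would first try the product of the principal Dirichlet eigenfunction of a small ball with a power of $t$, then propagate positivity by chaining balls using connectedness of $\Omega$.

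With the bound $u_k\ge c>0$, we get $A_k\ge c't$, so the sink is bounded in $L^\infty_{\mathrm{loc}}$. Parabolic $W^{2,1}_s$ estimates then give local Hölder compactness, and hence uniform convergence by Dini's theorem on compacta. In the Neumann case, the bound is global up to the boundary away from $t=0$, which yields the $C([\tau,T];C^1(\overline\Omega))$ convergence and $\partial_\nu u=0$.

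\textbf{Step 3: identifying the limits.} Since $A_k\to A$ pointwise and $A>0$, the formula gives $R_k\to\ind{D}/(qA)$ a.e. Dominated convergence with the local bound $R_k\le 1/(qA_k)\le C$ then gives the $L^s_{\mathrm{loc}}$ convergence of $R_k$ and of $R_k^{1/q}$. Passing to the limit in the weak form yields \eqref{eq:limit-equation}, and the $W^{2,1}_s$ bounds make it hold a.e.

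\textbf{Step 4: initial trace and boundary values.} For the initial trace I would use the energy identity. Testing with $u_k$ gives an $L^2$ estimate showing $\|u(t)-u_0\|_{L^2}\to0$. Here $u\le e^{t\Delta}u_0$ gives the limsup, and the barrier from below on $\{u_0>0\}$ gives the liminf. In the Dirichlet case, $u=0$ on $\partial\Omega$ follows from $0\le u_k\le e^{t\Delta}u_0$.
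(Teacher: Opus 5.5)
Your monotonicity step, once cleaned up, is the paper's argument. Because $m_2=m_4-1$, $R_k=kv_k^q$ solves $\partial_tR=-qu^pR^2$, and $k$ enters only through $R_k(\cdot,0)=kv_0^q$. Competitive comparison for the pair $(u,R)$ then gives $u_{k'}\le u_k$ and $R_{k'}\ge R_k$; the paper phrases this through the rescaling $\widetilde v=(k_2/k_1)^{1/q}v_{k_2}$. The order $v_k\ge v_{k'}$ you mention first is not the relevant one and should be dropped. Steps~3 and~4 are also fine once a positive lower bound uniform in $k$ is available.

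The genuine gap is that lower bound, which is the heart of the theorem. Your proposed subsolution $\varepsilon\phi(x)t^\beta$ fails for three reasons.

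\emph{Sign.} A subsolution must satisfy $\partial_t\underline u\le\Delta\underline u-(\text{sink})$. Here $\partial_t\underline u=\varepsilon\beta\phi t^{\beta-1}>0>-\lambda_1\underline u=\Delta\underline u$ wherever $\phi>0$, so the inequality fails for every $\varepsilon$; the time derivative cannot absorb the sink. More fundamentally, inside $D$ the solution starts at $0$. Comparison with the heat equation shows that no positive subsolution can vanish both on the lateral boundary of a ball and at $t=0$. Positivity has to be imported through a boundary on which $u_k$ is already bounded below.

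\emph{Scaling.} The sink $\underline u^{p+\alpha}/(q\int_0^t\underline u^p)$ is homogeneous of degree $\alpha$ in the amplitude and of order $\underline u/t$ in time. For $\alpha=1$ it is exactly critical, so small $\varepsilon$ gains nothing. In your example it equals $\frac{p\beta+1}{q\beta}\,\partial_t\underline u$, independent of $\varepsilon$.

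\emph{Comparison.} The sink is not monotone in $u$, since a larger $u$ also enlarges $A$. Your ``a lower bound on $u$ bounds the sink'' is therefore circular unless the barrier is paired with a second component.

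The missing idea is a barrier in the critical self-similar variable $\eta=y/\sqrt t$, launched from an inner boundary where positivity is already known. The paper takes $W=B(1-y/L)^2F(y/\sqrt t)$ with $F=e^{-c_w\eta-a_w\eta^2}$. Then $\int_0^tW^p=tB^p\chi^pG(\eta)$, and the sink scales exactly like the heat operator. The estimate $F^p/G\le C_0(1+pc_w\eta+2pa_w\eta^2)$ is absorbed by $F''=((c_w+2a_w\eta)^2-2a_w)F$ for suitable $a_w,c_w$. Since $W\le1$, $W^{p+\alpha}\le W^{p+1}$, which handles $\alpha>1$.

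This barrier is paired with $V_k=M(1+qkM^q\mathcal I)^{-1/q}$, where $M=\|v_0\|_{L^\infty(\Omega)}$. Then $kV_k^q\le1/(q\mathcal I)$ and $V_k$ solves the $v$-equation driven by the barrier, so the coupled comparison theorem applies. The construction is transplanted to annuli with the weight $e^{-\sigma(r-d_1)}$, $\sigma\ge(N-1)/d_1$, which controls the first-order radial term. It is then chained along balls starting from a ball in $\{u_0>0\}$, where $v_k\equiv0$ and $u_k$ solves the heat equation.

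In the Neumann case you also simply assert the global lower bound up to $\partial\Omega$. This requires a separate barrier in a boundary collar, built from $\operatorname{dist}(x,\partial\Omega)$ and satisfying $\partial_\nu\underline u\le0$ on $\partial\Omega$. The uniform bound on $R_k$ near $\partial\Omega$, and hence the $C([\tau,T];C^1(\overline\Omega))$ convergence, depends on it.
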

Theorem~\ref{thm:memory-limit} identifies the limit under the assumptions
above. If, in addition, the support of $u_0$ is compactly contained in
$\Omega$ and has positive distance from $D$, the limiting initial-boundary
value problem is also uniquely solvable.

\begin{theorem}\label{thm:memory-uniqueness}
Assume the hypotheses of Theorem~\ref{thm:memory-limit} and, in addition,
\begin{equation}\label{eq:separated-initial-supports}
\mathcal K:=\operatorname{supp}u_0\Subset\Omega,
\qquad
\operatorname{dist}(\mathcal K,D)>0.
\end{equation}
Then the limit $u$ in Theorem~\ref{thm:memory-limit} is the unique function $w$ 
satisfying the following conditions. Set
\[
A_w(x,t):=\int_0^t w(x,s)^p\,ds.
\]
For every $T>0$ and $0<\tau<T$,
\[
w\in
L^\infty(\Omega\times(0,T))
\cap C([0,T];L^2(\Omega))
\cap C(\overline\Omega\times(0,T]),
\]
\[
w>0
\qquad\text{in }\Omega\times(0,T],
\]
\[
w\in W^{2,1}_2(\Omega\times(\tau,T)),
\qquad
\ind{D}\frac{w^{p+\alpha}}{qA_w}
\in L^1(\Omega\times(0,T)),
\]
and
\[
\partial_tw
=
\Delta w-\ind{D}\frac{w^{p+\alpha}}{qA_w}
\quad\text{a.e. in }\Omega\times(0,T).
\]
Moreover,
\[
w(\cdot,t)\longrightarrow u_0
\quad\text{in }L^2(\Omega)
\qquad\text{as }t\downarrow0,
\]
and $w$ satisfies the corresponding homogeneous boundary condition.
In the Neumann case, we additionally require
\[
w\in C([\tau,T];C^1(\overline\Omega)).
\]
\end{theorem}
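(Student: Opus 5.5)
The plan is to prove uniqueness by a comparison (contraction) argument for two solutions $w_1,w_2$ in the stated class, reduced by the separation hypothesis to a problem where the memory term behaves monotonically. Write $F(w,A)=\ind{D}\,w^{p+\alpha}/(qA)$. The difficulty is that $F$ is increasing in $w$ (good for uniqueness) but decreasing in $A$, and $A$ is itself increasing in $w$. So the nonlocal-in-time part has the wrong sign for a naive $L^1$-contraction, and we must exploit the structure of the data on $D$.

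\emph{Step 1 (behaviour on $D$ near $t=0$).} By \eqref{eq:separated-initial-supports}, $u_0=0$ on a neighbourhood $D_\delta:=\{x:\operatorname{dist}(x,D)<\delta\}$ of $\overline D$. Using the reformulation in terms of $A_w$, set $B_w:=\ind{D}(qA_w)^{1/q}$ pointwise. On $D$ one has
\[
\partial_t A_w=w^p,
\qquad
\frac{w^{p+\alpha}}{qA_w}=w^{\alpha}\,\frac{\partial_tA_w}{qA_w}=w^\alpha\,\partial_t\log A_w^{1/q}.
\]
This mirrors the ODE for $v_k$: indeed $R=1/(qA)$ solves $\partial_tR=-q\,w^pR^{2}$, formally with $R(0)=+\infty$ on $D$. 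I would therefore pass to the variable $R_w:=\ind{D}/(qA_w)$, which satisfies the local-in-time system
\[
\partial_t w=\Delta w-w^{p+\alpha}R_w,
\qquad
\partial_tR_w=-q\,w^pR_w^2,
\]
with singular initial datum $R_w=+\infty$ on $D$. This is a quasi-monotone system: the reaction in the $w$-equation decreases in $R$, and the $R$-equation decreases in $w$. The pair $(w,-R)$ is therefore cooperative, and comparison for cooperative systems is the tool I would use.

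\emph{Step 2 (comparison).} Given two solutions, I would show $w_1\le w_2$ by symmetry. Fix $\varepsilon>0$ and compare $w_1$ with a time shift or perturbation of $w_2$, say $w_2^\varepsilon(x,t)=w_2(x,t+\varepsilon)$ plus a small correction. The shift is needed because $A_{w_2^\varepsilon}$ then has a positive initial value on $D$, making $R_2^\varepsilon$ finite and ordered correctly at $t=0$ relative to $R_1=+\infty$.

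The initial ordering $w_1(0)=u_0\le w_2(\cdot,\varepsilon)$ is not automatic. Here the separation is used: near $\mathcal K$ we have $R=0$, since the distance to $D$ is positive. So $w_2$ solves the heat equation on a neighbourhood of $\mathcal K$, and $w_2(\cdot,\varepsilon)-u_0\to0$ in $L^2$. I would therefore compare in $L^1/L^2$ rather than pointwise. Concretely, I would test the difference equations with $(w_1-w_2^\varepsilon)_+$ and with $(R_2^\varepsilon-R_1)_+$, using Kato-type inequalities; this is legitimate by the regularity $W^{2,1}_2$ on $(\tau,T)$ together with the $L^1$ bound on $F$. The outcome should be a Gronwall inequality for
\[
\|(w_1-w_2^\varepsilon)_+\|_{L^1}+\|(R_2^\varepsilon-R_1)_+\|_{L^1(\text{weighted})}.
\]
Its initial value tends to $0$ as $\varepsilon\to0$. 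Boundary terms vanish under either boundary condition; in the Neumann case this uses the $C^1$ requirement.

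\emph{Main obstacle.} The hard step is controlling $R$ near $t=0$ on $D$, where $R\sim 1/(q\,t\,w^p)$ blows up and $w$ may be small there, so the Lipschitz constants in Gronwall are unbounded. I would first try to show explicitly that on $D$
\[
R_w-\frac{1}{q\int_0^t w^p}
\]
is determined by $w$ alone, so that only the $w$-inequality is needed. The estimate then reduces to bounding $w_1^{p+\alpha}/A_{w_1}-w_2^{p+\alpha}/A_{w_2}$ when $w_1\le w_2$ fails. To get an integrable Lipschitz constant, I would use the lower bound $w>0$, together with continuity giving $w\ge c>0$ on compact sets for $t\ge\tau$, and the fact that near $t=0$ both solutions are small on $D_\delta$, so that the reaction is integrable by the assumed $L^1$ bound. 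Closing this Gronwall argument uniformly in $\varepsilon$ is where I expect most of the work.
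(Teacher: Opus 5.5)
Your cooperative reformulation in $(w,-R_w)$ is correct, and the paper also uses a forward time shift: it compares $w_1$ with $c\,w_2(\cdot,t+h)$. However, the step you leave open, closing the Gronwall argument uniformly in $\varepsilon$, is the whole difficulty, and the route you sketch for it cannot work.

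On $D$, any Gronwall constant for the coupled differences is governed by $R_w$. If you eliminate $R_w$, it is governed instead by the ratio $w^p/A_w$; the elimination itself gains nothing, since $R_w$ is by definition determined by $w$. Both quantities blow up as $t\downarrow0$. Since $u_0=0$ near $\overline D$, heuristically $w\approx e^{-a/t}$ on $D$. Then $A_w\approx\frac{t^2}{pa}e^{-pa/t}$, so $w^p/A_w\sim t^{-2}$ and $R_w$ grows like $e^{pa/t}$.

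After the shift by $\varepsilon$, the Gronwall factor is therefore unbounded as $\varepsilon\to0$. Meanwhile the initial discrepancy $\|(u_0-w_2(\cdot,\varepsilon))_+\|_{L^1}$ tends to zero with no rate when $u_0$ is merely continuous. Neither the $L^1$ bound on $\ind{D}w^{p+\alpha}/(qA_w)$ nor positivity of $w$ on compact sets gives an integrable Lipschitz constant.

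There is a second, independent problem. In the Dirichlet case $D$ may reach $\partial\Omega$. There $w\sim d(x)$ and $R_w\sim d(x)^{-p}$ even for $t\ge\tau$, so constants such as $\sup R^2w^{p-1}$ are infinite. To avoid this you need a bound on $w^p/A_w$ that is uniform up to $\partial\Omega$, and compact-set positivity does not provide one.

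The missing idea is to arrange an \emph{exact} initial ordering wherever the memory term acts, so that a huge but finite Lipschitz constant does no harm. The paper uses the separation hypothesis for this, not merely to get $w_2(\cdot,\varepsilon)\to u_0$, which is already part of the solution class.

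It chooses a smooth $\Omega_0$ with $\mathcal K\Subset\Omega_0\Subset\Omega\setminus\overline D$. On $\Omega_1=\Omega\setminus\overline{\Omega_0}$, $w_1$ has zero initial trace. So the initial ordering against the nonnegative function $c\,w_2(\cdot,t+h)$ is exact, not approximate, and $A_1(\cdot,0)=0\le c^pA_2(\cdot,h)$.

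The lateral ordering $w_1(x,t)\le c\,w_2(x,t+h)$ on $\partial\Omega_0$ for small $t$ comes from Gaussian asymptotics. Both $w_i$ solve the heat equation near $\mathcal K$. Both satisfy $w_i/\widetilde u\to1$ uniformly on $\partial\Omega_0$, where $\widetilde u$ is the free Gaussian evolution of $u_0$, and $\widetilde u$ is nondecreasing in $t$ there. This is why a factor $c>1$ is needed; $c\,w_2(\cdot,\cdot+h)$ remains a supersolution because $c^\alpha\ge c$.

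The comparison on $\Omega_1\times(0,s)$ then rests on a comparison lemma for the memory equation. The $1/A$ term is controlled by two ingredients:
\begin{itemize}
\item the Minkowski bound $(A_1^{1/p}-A_2^{1/p})_+\le(\int P^p\,ds)^{1/p}$;
\item the ratio bound $w_2^p\le C_hA_2$ for the shifted supersolution, which comes from boundary-distance barriers (in the Dirichlet case, $c\,d(x)\le w\le C\,d(x)$).
\end{itemize}
A Volterra--Gronwall argument in sup norm then closes the estimate.

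On $\Omega_0$ one compares heat solutions directly. From $t=s$ the same lemma propagates $w_1\le c\,w_2$ and $A_1\le c^pA_2$ to $(s,T)$. Letting $h\downarrow0$ and $c\downarrow1$, and using symmetry, gives uniqueness. Without this spatial splitting, or an equivalent device, your argument does not prove the theorem.
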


The additional assumption
\eqref{eq:separated-initial-supports} is used only for uniqueness and is
not needed for the convergence in Theorem~\ref{thm:memory-limit}.
No differentiability of $u_0$ or regularity of $\partial D$ is assumed.

Theorem~\ref{thm:memory-limit} separates the disappearance of the interface
from the disappearance of the reaction. The limit $u$ is positive at every
interior point for $t>0$, but on $D$ it satisfies an equation with the
nonzero reaction term $u^{p+\alpha}/(qA)$.
This term acts inside the positive phase, rather than only through an
interface condition. Thus $D$ remains explicitly in the limiting equation
even after the interface disappears.

The mechanism is visible in the exact formula for the second component.
With
\[
A_k(x,t)=\int_0^t u_k(x,s)^p\,ds,
\]
we have
\[
v_k(x,t)^q
=\frac{v_0(x)^q}{1+qkv_0(x)^qA_k(x,t)},
\qquad
R_k(x,t)
=\frac{kv_0(x)^q}{1+qkv_0(x)^qA_k(x,t)}.
\]
At points where $v_0(x)>0$, this gives
\[
R_k(x,t)
=\frac1{k^{-1}v_0(x)^{-q}+qA_k(x,t)}
\longrightarrow\frac1{qA(x,t)}.
\]
In particular, for every fixed positive time, $v_k$ has the scale
$k^{-1/q}$ at these points, so the factor $k$ compensates for the
smallness of $v_k^q$ in the first equation.
For comparison, in the vanishing-interface family
$(m_1,m_2,m_3,m_4)=(m,1,1,1)$, we have
\[
v_k(x,t)
=v_0(x)\exp\left(-k\int_0^t u_k(x,s)\,ds\right).
\]
A positive lower bound for the time integral then gives exponential decay in
$k$, and the reaction term vanishes locally at positive times in the
vanishing-interface regime
\cite{Tsukamoto2025InterfaceDisappearanceFast,
Tsukamoto2026VanishingInterfacesAsymmetric}.
In contrast, the algebraic formula in the present problem yields the
nonzero coefficient $\ind{D}/(qA)$ in \eqref{eq:limit-equation}.

The absence of diffusion in the second equation is essential here:
it allows the equation for $v_k$ to be solved explicitly at each spatial
point, and $v_k(x,t)=0$ whenever $v_0(x)=0$.
The time-integral term in the limiting equation follows directly from
this explicit formula.
Since the limiting coefficient $\ind{D}/(qA)$ may be discontinuous across
$\partial D$, the convergence of $R_k$ is stated in
$L^s_{\mathrm{loc}}$ for every finite $s$.
Under Dirichlet boundary conditions, the local nature of the convergence
for $v_k$ is also important: $v_k=v_0$ on $\partial\Omega$, so decay need
not occur at boundary points where $v_0>0$.

The main analytical step is to obtain a positive lower bound for $A_k$
uniformly in $k$. We adapt the one-dimensional and annular barriers used in
\cite{Tsukamoto2026VanishingInterfacesAsymmetric} to the reaction term that
survives here. The one-dimensional barrier is constructed to satisfy an
inequality involving its own time integral. For each $k>0$, we then define
a corresponding second component so that the resulting pair can be compared
directly with the solution of the original coupled system. An exponential
spatial weight controls the additional radial terms in the annular
construction.

The segregated initial data provide a ball on which $v_k=0$ and $u_k$
satisfies the heat equation. The resulting positive lower bound is then
propagated to any interior compact set through a finite chain of overlapping
balls. In the Neumann case, a boundary-distance argument extends this
estimate up to $\partial\Omega$.

These bounds control the reaction term and allow standard parabolic
regularity and compactness arguments.
Monotonicity in $k$ then gives convergence of the whole family.
Boundary comparison in the Dirichlet case and global parabolic estimates
in the Neumann case show that the boundary condition is inherited by the
limit.

The paper is organized as follows.
Section~\ref{sec:transform-comparison} establishes well-posedness and a
comparison principle for the general four-exponent system, together with
monotonicity in the reaction rate under $m_2\le m_4-1$.
Section~\ref{sec:one-dimensional-memory-barrier} derives the exact
representation of the nondiffusing component and constructs the
one-dimensional barrier.
Section~\ref{sec:annular-barriers-main-results} proves the uniform
estimates in the interior and near the boundary in the Neumann case and
completes the proof of Theorem~\ref{thm:memory-limit}.
Section~\ref{sec:memory-uniqueness} proves
Theorem~\ref{thm:memory-uniqueness}.

\section{Well-posedness, comparison, and monotonicity}
\label{sec:transform-comparison}
We use the standard solvability, the maximum principle, and parabolic
$L^r$-estimates for the linear heat equation with the prescribed
homogeneous boundary condition
\cite{LadyzhenskayaSolonnikovUraltseva1968LinearQuasilinearParabolic}.

We first establish well-posedness and a comparison principle
for the general four-exponent system
\eqref{eq:general-four-exponent-system}.
We then prove monotonicity with respect to the reaction rate
under the condition $m_2\le m_4-1$.

\begin{theorem}
\label{thm:general-wellposedness}
Let \(\Omega\subset\mathbb R^N\) be a bounded domain with \(C^2\) boundary,
and let
\[
  m_1,m_2,m_3,m_4\ge1.
\]
Assume that
\[
  u_0,v_0\in C(\overline\Omega),
  \qquad
  u_0\ge0,
  \qquad
  v_0\ge0.
\]
For each \(k>0\), consider
\begin{equation}\label{eq:general-four-exponent-system}
\begin{cases}
  \partial_tu_k
  =
  \Delta u_k-ku_k^{m_1}v_k^{m_2}
  &\text{in }\Omega\times(0,\infty),\\
  \partial_tv_k
  =
  -ku_k^{m_3}v_k^{m_4}
  &\text{in }\Omega\times(0,\infty),\\
  u_k(\cdot,0)=u_0,
  \qquad
  v_k(\cdot,0)=v_0
  &\text{in }\Omega.
\end{cases}
\end{equation}
Fix either the homogeneous Dirichlet condition
\begin{equation}\label{eq:general-dirichlet}
  u_k=0
  \qquad
  \text{on }\partial\Omega\times(0,\infty),
\end{equation}
or the homogeneous Neumann condition
\begin{equation}\label{eq:general-neumann}
  \partial_\nu u_k=0
  \qquad
  \text{on }\partial\Omega\times(0,\infty).
\end{equation}
In the Dirichlet case, assume in addition that
\[
  u_0=0
  \qquad
  \text{on }\partial\Omega.
\]

Then, for every $k>0$, problem
\eqref{eq:general-four-exponent-system}, with the chosen boundary
condition, has a unique global nonnegative solution $(u_k,v_k)$
satisfying
\[
\begin{aligned}
u_k
&\in
C\bigl([0,\infty);C(\overline\Omega)\bigr)
\cap
C^1\bigl((0,\infty);C(\overline\Omega)\bigr)
\cap
C\bigl((0,\infty);W^{2,r}(\Omega)\bigr),\\
v_k
&\in
C^1\bigl([0,\infty);C(\overline\Omega)\bigr)
\end{aligned}
\]
for every $N<r<\infty$.
Furthermore,
\begin{equation}\label{eq:general-basic-bounds}
\begin{aligned}
0\le u_k(x,t)
&\le \lVert u_0\rVert_{L^\infty(\Omega)},\\
0\le v_k(x,t)
&\le v_0(x)
\le \lVert v_0\rVert_{L^\infty(\Omega)}
\end{aligned}
\end{equation}
for every $(x,t)\in\overline\Omega\times[0,\infty)$.
In particular, for each $x\in\overline\Omega$, the function
$t\mapsto v_k(x,t)$ is nonincreasing.
\end{theorem}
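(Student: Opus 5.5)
We first prove local existence and uniqueness by a truncation and fixed-point argument, then obtain the a priori bounds \eqref{eq:general-basic-bounds}, which make the solution global. Since the nonlinearities $u^{m_1}v^{m_2}$ and $u^{m_3}v^{m_4}$ are only locally Lipschitz when $m_i\ge1$, we replace $u,v$ in the reaction terms by $\tilde u=\min\{\max\{u,0\},M\}$ and $\tilde v=\min\{\max\{v,0\},M\}$, with $M>\max\{\|u_0\|_\infty,\|v_0\|_\infty\}$. The truncated reactions are globally Lipschitz on $\R^2$. We write the $v$-equation as the integral equation
\[
v(x,t)=v_0(x)-k\int_0^t\tilde u^{m_3}\tilde v^{m_4}(x,s)\,ds,
\]
and the $u$-equation in mild form,
\[
u(t)=S(t)u_0-k\int_0^tS(t-s)\bigl(\tilde u^{m_1}\tilde v^{m_2}\bigr)(s)\,ds,
\]
where $S(t)$ is the heat semigroup on $C(\overline\Omega)$ with the chosen boundary condition. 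In the Dirichlet case, the compatibility $u_0=0$ on $\partial\Omega$ makes $S(t)$ strongly continuous on $C_0(\overline\Omega)$. A contraction on $C([0,T_0];C(\overline\Omega))^2$ for small $T_0$, with $T_0$ depending only on $k$ and $M$, gives a unique mild solution; iterating gives a global solution of the truncated problem. The stated regularity follows from standard results \cite{LadyzhenskayaSolonnikovUraltseva1968LinearQuasilinearParabolic}. The forcing $f=\tilde u^{m_1}\tilde v^{m_2}$ is bounded and Hölder in time, so maximal $L^r$-regularity gives $u\in C((0,\infty);W^{2,r})$ for $r>N$, and analytic-semigroup theory gives $C^1((0,\infty);C(\overline\Omega))$. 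The right-hand side of the $v$-equation is continuous in $(x,t)$, which gives $v\in C^1([0,\infty);C(\overline\Omega))$.

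Next we show that the truncation is inactive. For $v$ this is pointwise ODE reasoning. At each $x$, the function $v(x,\cdot)$ solves an ODE whose right-hand side vanishes when $\tilde v=0$. Uniqueness for the Lipschitz ODE, compared with the zero solution, gives $v\ge0$. Since the right-hand side is $\le0$, $v$ is nonincreasing, so $v(x,t)\le v_0(x)<M$. For $u$ we use the weak maximum principle. Testing the $u$-equation with $u^-$, the reaction term $-k\tilde u^{m_1}\tilde v^{m_2}$ vanishes where $u\le0$, so $\frac{d}{dt}\|u^-\|_2^2\le0$ and hence $u\ge0$. Testing with $(u-\|u_0\|_\infty)^+$, the reaction term is $\le0$; the boundary term vanishes in both cases, since $u=0$ on the boundary in the Dirichlet case and $\|u_0\|_\infty\ge0$. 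This gives $u\le\|u_0\|_\infty<M$. Thus the truncated solution solves the original problem and satisfies \eqref{eq:general-basic-bounds}.

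For uniqueness, any nonnegative solution of the original problem in the stated class satisfies the same bounds by the same argument, so it also solves the truncated problem. Uniqueness therefore follows from the contraction, or alternatively from a Gronwall estimate on $\|u_1-u_2\|_\infty+\|v_1-v_2\|_\infty$ using the mild formulation and the Lipschitz constant on $[0,M]^2$. The monotonicity of $t\mapsto v_k(x,t)$ was already obtained in the previous step.

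The main obstacle is justifying the regularity near $t=0$ and the maximum-principle tests for mild solutions, since $u_0$ is only continuous. To handle this, we carry out the energy tests on $[\varepsilon,T]$, where $u\in W^{2,r}$ and $\partial_tu\in C(\overline\Omega)$, and let $\varepsilon\to0$ using $u\in C([0,\infty);C(\overline\Omega))$. Alternatively, we approximate $u_0$ by smooth compatible data and pass to the limit via the Lipschitz stability of the fixed point.
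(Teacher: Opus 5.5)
Your proposal is correct and follows essentially the same route as the paper. Both arguments run a Banach fixed-point argument for a Lipschitz modification of the reaction terms. Both then obtain $v\ge0$ by ODE uniqueness against the zero solution, and $u\ge0$ together with $u\le\|u_0\|_{L^\infty(\Omega)}$ by $L^2$ testing on $(\varepsilon,t)$ with $\varepsilon\downarrow0$, before removing the modification.

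The differences are cosmetic. You truncate on both sides, so the modified problem is globally Lipschitz and global at once; the paper uses positive parts, builds a maximal solution, and uses the a priori bound to rule out $T_{\max}<\infty$. You also take the regularity from analytic-semigroup theory, whereas the paper applies $L^r$ estimates to time difference quotients.
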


\begin{proof}
Fix $k>0$. Let $\mathcal B w=w|_{\partial\Omega}$ in the
Dirichlet case and $\mathcal B w=\partial_\nu w|_{\partial\Omega}$
in the Neumann case, and set
\[
Q_{a,b}=\Omega\times(a,b).
\]

We use the standard solvability and maximum principle for the
linear heat equation with the prescribed homogeneous boundary
condition.
For $h>0$ and $F\in C(\overline\Omega\times[0,h])$, the problem
\begin{equation}\label{eq:linear-heat-problem}
\begin{cases}
\partial_tU-\Delta U=F
    & \text{in }Q_{0,h},\\
\mathcal B U=0
    & \text{on }\partial\Omega\times(0,h),\\
U(\cdot,0)=u_0
    & \text{in }\Omega
\end{cases}
\end{equation}
has a unique solution continuous on
$\overline\Omega\times[0,h]$, provided that
$u_0=0$ on $\partial\Omega$ in the Dirichlet case.
Moreover,
\begin{equation}\label{eq:linear-heat-sup-bound}
\lVert U\rVert_{C(\overline\Omega\times[0,h])}
\le
\lVert u_0\rVert_{L^\infty(\Omega)}
+
h\lVert F\rVert_{C(\overline\Omega\times[0,h])}.
\end{equation}

For $s\in\mathbb R$, write $s_+=\max\{s,0\}$, and define
\[
f_k(a,b)=-k(a_+)^{m_1}(b_+)^{m_2},
\qquad
g_k(a,b)=-k(a_+)^{m_3}(b_+)^{m_4}.
\]
Since $m_i\ge1$, the map $(f_k,g_k)$ is Lipschitz on bounded
subsets of $\mathbb R^2$.
For $h>0$, let $\mathcal Y_h=C(\overline\Omega\times[0,h])^2$,
and equip $\mathcal Y_h$ with the norm
\[
\|(\varphi,\psi)\|_h
=
\max\left\{
\|\varphi\|_{C(\overline\Omega\times[0,h])},
\|\psi\|_{C(\overline\Omega\times[0,h])}
\right\}.
\]
Given $(\varphi,\psi)\in\mathcal Y_h$, let $U$ be the solution
of \eqref{eq:linear-heat-problem} with
$F=f_k(\varphi,\psi)$, and define
\[
V(x,t)
=
v_0(x)
+
\int_0^t
g_k(\varphi(x,s),\psi(x,s))\,ds.
\]
Define $\Phi(\varphi,\psi)=(U,V)$.

Set
\[
R_0=\max\left\{
\lVert u_0\rVert_{L^\infty(\Omega)},
\lVert v_0\rVert_{L^\infty(\Omega)}
\right\},
\]
and fix $M>R_0$.
Let $B_M$ and $L_M$ be a bound and a Lipschitz constant for
$(f_k,g_k)$ on $[-M,M]^2$.
Estimate \eqref{eq:linear-heat-sup-bound}, applied also to
differences, and the integral formula for $V$ give
\[
\begin{aligned}
\lVert\Phi(\varphi_1,\psi_1)\rVert_h
&\le R_0+hB_M,\\
\lVert
\Phi(\varphi_1,\psi_1)-\Phi(\varphi_2,\psi_2)
\rVert_h
&\le
2hL_M
\lVert(\varphi_1-\varphi_2,\psi_1-\psi_2)\rVert_h
\end{aligned}
\]
whenever
$\lVert(\varphi_i,\psi_i)\rVert_h\le M$ for $i=1,2$.
Choose $h>0$ so that
\begin{equation}\label{eq:local-existence-time}
hB_M\le M-R_0,
\qquad
2hL_M<1.
\end{equation}
Then $\Phi$ is a contraction of the closed ball of radius $M$
in $\mathcal Y_h$ into itself.
Banach's fixed-point theorem therefore yields a unique local
solution of the system
\begin{equation}\label{eq:extended-four-exponent-system}
\begin{cases}
\partial_tu_k-\Delta u_k=f_k(u_k,v_k),\\
\partial_tv_k=g_k(u_k,v_k),
\end{cases}
\end{equation}
with the prescribed initial and boundary conditions.
Continuing the local construction yields a unique maximal solution
\[
(u_k,v_k)\in
C([0,T_{\max});C(\overline\Omega))^2,
\qquad
T_{\max}\in(0,\infty].
\]

The integral form of the second equation in
\eqref{eq:extended-four-exponent-system} gives
\begin{equation}\label{eq:vk-time-regularity}
v_k\in C^1([0,T_{\max});C(\overline\Omega)).
\end{equation}
Set $H_k=f_k(u_k,v_k)$.
Since $H_k$ is continuous, the parabolic $L^r$-estimates,
applied away from the initial time, yield
\begin{equation}\label{eq:uk-parabolic-regularity}
u_k\in W^{2,1}_r(Q_{a,b})
\qquad
(0<a<b<T_{\max},\ 1<r<\infty).
\end{equation}

Fix $1<r<\infty$ and $0<a<\tau<T<b<T_{\max}$.
By \eqref{eq:vk-time-regularity},
\eqref{eq:uk-parabolic-regularity},
and the local Lipschitz continuity of $f_k$,
\begin{equation}\label{eq:Hk-time-regularity}
H_k\in W^{1,r}(a,b;L^r(\Omega)).
\end{equation}
By \eqref{eq:Hk-time-regularity}, the linear $L^r$-estimate,
applied to the time difference quotients of the first equation
in \eqref{eq:extended-four-exponent-system} with a cutoff in time,
yields
\begin{equation}\label{eq:uk-time-parabolic-regularity}
\partial_tu_k\in W^{2,1}_r(Q_{\tau,T}).
\end{equation}

Taking $r>N+2$ in
\eqref{eq:uk-time-parabolic-regularity}, the parabolic Sobolev
embedding yields
\begin{equation}\label{eq:uk-time-regularity}
u_k\in C^1((0,T_{\max});C(\overline\Omega)).
\end{equation}
Since $\partial_tu_k-H_k=\Delta u_k$ is continuous,
elliptic regularity gives
\begin{equation}\label{eq:uk-spatial-regularity}
u_k\in C((0,T_{\max});W^{2,r}(\Omega))
\qquad (1<r<\infty).
\end{equation}

Let $(u_k)_-=\max\{-u_k,0\}$.
Taking $r>\max\{N,2\}$,
\eqref{eq:uk-time-regularity} and
\eqref{eq:uk-spatial-regularity} justify testing the first
equation in \eqref{eq:extended-four-exponent-system}
with $-(u_k)_-$ on $(\varepsilon,t)$, where
$0<\varepsilon<t<T_{\max}$.
The boundary term vanishes under either boundary condition,
and the reaction term vanishes because $(u_k)_+^{m_1}(u_k)_-=0$.
Hence
\begin{equation}\label{eq:uk-negative-part-estimate}
\frac12
\lVert (u_k)_-(t)\rVert_{L^2(\Omega)}^2
+
\int_\varepsilon^t
\lVert\nabla (u_k)_-(s)\rVert_{L^2(\Omega)}^2\,ds
=
\frac12
\lVert (u_k)_-(\varepsilon)\rVert_{L^2(\Omega)}^2.
\end{equation}
Letting $\varepsilon\downarrow0$ in
\eqref{eq:uk-negative-part-estimate}, continuity at the
initial time and $u_0\ge0$ give $u_k\ge0$.

For each fixed $x\in\overline\Omega$, the scalar equation for
$v_k(x,\cdot)$ admits the zero solution and has a right-hand
side locally Lipschitz in $v_k$.
Since $v_0\ge0$, uniqueness gives $v_k\ge0$.
The second equation in
\eqref{eq:extended-four-exponent-system} then yields
$\partial_t v_k\le0$, and hence
\begin{equation}\label{eq:vk-pointwise-bound}
0\le v_k(x,t)\le v_0(x)
\qquad
(x\in\overline\Omega,\ 0\le t<T_{\max}).
\end{equation}
Therefore the positive parts can be removed, and $(u_k,v_k)$
solves \eqref{eq:general-four-exponent-system}.

Since $\partial_tu_k-\Delta u_k\le0$, comparison with the
constant $\lVert u_0\rVert_{L^\infty(\Omega)}$ gives
\[
0\le u_k(x,t)\le\lVert u_0\rVert_{L^\infty(\Omega)}.
\]
This and \eqref{eq:vk-pointwise-bound} prove
\eqref{eq:general-basic-bounds} on
$\overline\Omega\times[0,T_{\max})$.
If $T_{\max}<\infty$, \eqref{eq:general-basic-bounds} shows that
\[
\max\left\{
\lVert u_k(t)\rVert_{L^\infty(\Omega)},
\lVert v_k(t)\rVert_{L^\infty(\Omega)}
\right\}
\le R_0
\qquad (0\le t<T_{\max}).
\]
Hence the same $M$, $B_M$, and $L_M$ can be used when the
solution is restarted at any $t_0<T_{\max}$.
By \eqref{eq:local-existence-time}, the solution can be extended
from any $t_0<T_{\max}$ by a time $h_*>0$ independent of $t_0$.
Taking $t_0$ sufficiently close to $T_{\max}$ contradicts
maximality.
Thus $T_{\max}=\infty$.
The theorem follows.
\end{proof}

We next prove a comparison principle for
\eqref{eq:general-four-exponent-system}.
The initial ordering is imposed in $L^2$ to allow for the
initial traces of the barriers constructed below.

\begin{theorem}\label{prop:coupled-comparison}
Let $G\subset\mathbb R^N$ be a bounded open set with $C^2$
boundary, let $t_0<t_1$, and set $Q=G\times(t_0,t_1)$.
Fix $k>0$ and $m_1,m_2,m_3,m_4\ge1$.
Let $\overline u,\overline v,\underline u,\underline v$
be nonnegative bounded functions on $Q$ such that, for every
$s\in(t_0,t_1)$,
\begin{equation}\label{eq:comparison-regularity}
\begin{aligned}
\overline u,\underline u
&\in W^{2,1}_2(G\times(s,t_1)),\\
\overline v,\underline v
&\in W^{1,2}(s,t_1;L^2(G)).
\end{aligned}
\end{equation}
Assume that
\begin{equation}\label{eq:comparison-upper-pair}
\begin{cases}
\partial_t\overline u
\ge
\Delta\overline u
-k\overline u^{m_1}\overline v^{m_2},\\
\partial_t\overline v
\le
-k\overline u^{m_3}\overline v^{m_4},
\end{cases}
\end{equation}
and
\begin{equation}\label{eq:comparison-lower-pair}
\begin{cases}
\partial_t\underline u
\le
\Delta\underline u
-k\underline u^{m_1}\underline v^{m_2},\\
\partial_t\underline v
\ge
-k\underline u^{m_3}\underline v^{m_4}
\end{cases}
\end{equation}
hold almost everywhere in $Q$.
On $\partial G\times(t_0,t_1)$, assume either
\begin{equation}\label{eq:comparison-boundary-order}
\underline u\le\overline u
\qquad\text{or}\qquad
\partial_\nu\underline u\le\partial_\nu\overline u,
\end{equation}
where $\nu$ denotes the outward unit normal
to $\partial G$.
If
\begin{equation}\label{eq:comparison-initial-order}
\lVert(\underline u(t)-\overline u(t))_+\rVert_{L^2(G)}
+
\lVert(\overline v(t)-\underline v(t))_+\rVert_{L^2(G)}
\longrightarrow0
\qquad(t\downarrow t_0),
\end{equation}
then
\begin{equation}\label{eq:comparison-conclusion}
\underline u\le\overline u,
\qquad
\overline v\le\underline v
\qquad\text{almost everywhere in }Q.
\end{equation}
\end{theorem}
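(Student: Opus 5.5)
The plan is a Gronwall-type $L^2$ energy argument for the positive parts of the two differences $U:=(\underline u-\overline u)_+$ and $W:=(\overline v-\underline v)_+$. The system is quasi-monotone in the right sense: increasing $v$ decreases the $u$-reaction, and increasing $u$ decreases the $v$-reaction. So the ordering $\underline u\le\overline u$, $\overline v\le\underline v$ is the one preserved by the cross-coupling.

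Fix $t_0<s<t<t_1$. The regularity \eqref{eq:comparison-regularity} guarantees that $\tfrac12\|U(\tau)\|^2_{L^2(G)}$ and $\tfrac12\|W(\tau)\|^2_{L^2(G)}$ are absolutely continuous on $[s,t]$. Their derivatives are
\[
\int_G U\,\partial_\tau(\underline u-\overline u)
\qquad\text{and}\qquad
\int_G W\,\partial_\tau(\overline v-\underline v),
\]
by the chain rule for $(\cdot)_+$ in $W^{1,2}$ and, for $U$, the standard Lions--Magenes-type identity for $W^{2,1}_2$ functions.

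For the $u$-difference I subtract the inequalities in \eqref{eq:comparison-lower-pair} and \eqref{eq:comparison-upper-pair}, test with $U$, and integrate by parts. This gives
\[
\int_G U\,\Delta(\underline u-\overline u)
=-\int_G|\nabla U|^2+\int_{\partial G}U\,\partial_\nu(\underline u-\overline u).
\]
The boundary term is $\le0$ under either alternative of \eqref{eq:comparison-boundary-order}. In the first alternative $U=0$ on $\partial G$. In the second, $U\ge0$ and $\partial_\nu(\underline u-\overline u)\le0$. Here I interpret the hypothesis as holding throughout $\partial G\times(t_0,t_1)$ in the trace sense, with the conclusion for a mixed choice being the same.

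The reaction contribution is $-k\int_G U(\underline u^{m_1}\underline v^{m_2}-\overline u^{m_1}\overline v^{m_2})$. I write the difference as
\[
(\underline u^{m_1}-\overline u^{m_1})\underline v^{m_2}
+\overline u^{m_1}(\underline v^{m_2}-\overline v^{m_2}).
\]
On $\{U>0\}$ the first term is nonnegative, so it helps. For the second term, $\underline v^{m_2}-\overline v^{m_2}\ge -L\,W$, where $L=m_2M^{m_2-1}$ and $M$ bounds all four functions. Hence the $u$-reaction contributes at most $kM^{m_1}L\int_G UW$.

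For the $v$-difference I test with $W$ and obtain the reaction contribution $-k\int_G W(\overline u^{m_3}\overline v^{m_4}-\underline u^{m_3}\underline v^{m_4})$. I split the difference as
\[
\overline u^{m_3}(\overline v^{m_4}-\underline v^{m_4})
+(\overline u^{m_3}-\underline u^{m_3})\underline v^{m_4}.
\]
The first term is $\ge0$ on $\{W>0\}$. The second is $\ge -C\,U$. So this contribution is bounded by $C k\int_G UW$.

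Adding the two estimates and using $UW\le\tfrac12(U^2+W^2)$ yields
\[
\frac{d}{d\tau}\bigl(\|U\|^2+\|W\|^2\bigr)\le C_k\bigl(\|U\|^2+\|W\|^2\bigr)
\]
on $(s,t_1)$. Gronwall then gives
\[
\|U(t)\|^2+\|W(t)\|^2\le e^{C_k(t-s)}\bigl(\|U(s)\|^2+\|W(s)\|^2\bigr).
\]
Letting $s\downarrow t_0$ and using \eqref{eq:comparison-initial-order}, the right side tends to zero, which proves \eqref{eq:comparison-conclusion}.

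The main obstacle is technical rather than structural: justifying the energy identity and the boundary-term sign with only $W^{2,1}_2$ regularity away from $t_0$ and $L^2$ convergence at $t_0$. I would first prove the identity for smooth approximations, using density of smooth functions in $W^{2,1}_2(G\times(s,t_1))$ and continuity of traces. Then I would pass to the limit, using that $(\cdot)_+$ is Lipschitz and that $\nabla(f_+)=\ind{\{f>0\}}\nabla f$.
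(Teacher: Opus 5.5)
Your proposal is correct and follows essentially the same route as the paper: $L^2$ energy estimates for $(\underline u-\overline u)_+$ and $(\overline v-\underline v)_+$, the same splitting of each reaction difference so that only the cross term $\int_G UW$ survives, and then Gronwall on $(s,t_1)$ followed by $s\downarrow t_0$. Your remark that different boundary alternatives may be used on different parts of $\partial G$ is exactly how the paper later applies the result in the Neumann boundary-layer argument.
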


\begin{proof}
Set
\[
U=(\underline u-\overline u)_+,
\qquad
V=(\overline v-\underline v)_+.
\]
Since the four functions are bounded on $Q$, choose $M\ge1$ such that
\[
0\le
\overline u,\underline u,\overline v,\underline v
\le M
\qquad\text{almost everywhere in }Q.
\]
Set
\[
C_1=km_2M^{m_1+m_2-1},
\qquad
C_2=km_3M^{m_3+m_4-1}.
\]

On $\{U>0\}$, we have $\underline u>\overline u$, so
\[
\begin{aligned}
&k\bigl(
\overline u^{m_1}\overline v^{m_2}
-\underline u^{m_1}\underline v^{m_2}
\bigr)\\
&\quad=
k\overline u^{m_1}
\bigl(\overline v^{m_2}-\underline v^{m_2}\bigr)
+
k\underline v^{m_2}
\bigl(\overline u^{m_1}-\underline u^{m_1}\bigr)\\
&\quad\le C_1V.
\end{aligned}
\]
Subtracting the first inequality in
\eqref{eq:comparison-upper-pair} from the first inequality in
\eqref{eq:comparison-lower-pair}, multiplying by $U$, and
integrating over $G$, we obtain
\begin{equation}\label{eq:comparison-u-energy}
\frac12\frac{d}{dt}\lVert U\rVert_{L^2(G)}^2
+
\lVert\nabla U\rVert_{L^2(G)}^2
\le
C_1\int_G UV\,dx.
\end{equation}
Here the boundary contribution is nonpositive by
\eqref{eq:comparison-boundary-order}.

Similarly, on $\{V>0\}$, we have $\overline v>\underline v$,
and hence
\[
\begin{aligned}
&k\bigl(
\underline u^{m_3}\underline v^{m_4}
-\overline u^{m_3}\overline v^{m_4}
\bigr)\\
&\quad=
k\underline v^{m_4}
\bigl(\underline u^{m_3}-\overline u^{m_3}\bigr)
+
k\overline u^{m_3}
\bigl(\underline v^{m_4}-\overline v^{m_4}\bigr)\\
&\quad\le C_2U.
\end{aligned}
\]
Subtracting the second inequality in
\eqref{eq:comparison-lower-pair} from the second inequality in
\eqref{eq:comparison-upper-pair}, multiplying by $V$, and
integrating over $G$, we obtain
\begin{equation}\label{eq:comparison-v-energy}
\frac12\frac{d}{dt}\lVert V\rVert_{L^2(G)}^2
\le
C_2\int_G UV\,dx.
\end{equation}

Set
\[
E(t)=
\lVert U(t)\rVert_{L^2(G)}^2
+
\lVert V(t)\rVert_{L^2(G)}^2.
\]
Adding \eqref{eq:comparison-u-energy} and
\eqref{eq:comparison-v-energy}, and using $2UV\le U^2+V^2$,
we obtain
\[
E'(t)\le(C_1+C_2)E(t)
\qquad\text{for almost every }t\in(t_0,t_1).
\]
Thus, for $t_0<s<t<t_1$, Gronwall's inequality gives
\[
E(t)\le e^{(C_1+C_2)(t-s)}E(s).
\]
Letting $s\downarrow t_0$ and using
\eqref{eq:comparison-initial-order} yields $E(t)=0$.
The theorem follows.
\end{proof}

We next use the comparison principle to compare solutions
corresponding to different reaction rates.

\begin{lemma}\label{lem:reaction-rate-monotonicity}
Assume the hypotheses of
Theorem~\ref{thm:general-wellposedness}, and suppose that
\begin{equation}\label{eq:monotonicity-exponent-condition}
m_2\le m_4-1.
\end{equation}
For each $k>0$, let $(u_k,v_k)$ be the solution of
\eqref{eq:general-four-exponent-system} with the prescribed
initial and boundary conditions.
Then, for every $0<k_1<k_2$,
\begin{equation}\label{eq:reaction-rate-monotonicity}
u_{k_2}\le u_{k_1},
\qquad
k_1^{1/m_2}v_{k_1}\le k_2^{1/m_2}v_{k_2}
\end{equation}
on $\overline\Omega\times[0,\infty)$.
\end{lemma}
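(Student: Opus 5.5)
We rescale the second component and apply the comparison principle, Theorem~\ref{prop:coupled-comparison}, to the system with rate $k_2$. Fix $0<k_1<k_2$ and set $\lambda=(k_1/k_2)^{1/m_2}\in(0,1)$. Define
\[
\overline u=u_{k_1},\qquad \overline v=\lambda v_{k_1},\qquad \underline u=u_{k_2},\qquad \underline v=v_{k_2}.
\]
Then $(\underline u,\underline v)$ solves the $k_2$-system with equality. The conclusion $\overline v\le\underline v$ is exactly $k_1^{1/m_2}v_{k_1}\le k_2^{1/m_2}v_{k_2}$. So it suffices to show that $(\overline u,\overline v)$ is an upper pair for the $k_2$-system in the sense of \eqref{eq:comparison-upper-pair}.

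For the first inequality, $k_2\overline u^{m_1}\overline v^{m_2}=k_2\lambda^{m_2}u_{k_1}^{m_1}v_{k_1}^{m_2}=k_1u_{k_1}^{m_1}v_{k_1}^{m_2}$. Hence $\partial_t\overline u=\Delta\overline u-k_2\overline u^{m_1}\overline v^{m_2}$ holds with equality.

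For the second inequality, we need
\[
\lambda\partial_tv_{k_1}=-\lambda k_1u_{k_1}^{m_3}v_{k_1}^{m_4}\le -k_2\lambda^{m_4}u_{k_1}^{m_3}v_{k_1}^{m_4}.
\]
This reduces to $k_2\lambda^{m_4-1}\le k_1=k_2\lambda^{m_2}$, that is, $\lambda^{m_4-1}\le\lambda^{m_2}$. Since $\lambda<1$, this holds precisely when $m_4-1\ge m_2$, which is \eqref{eq:monotonicity-exponent-condition}.

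Next we check the remaining hypotheses of Theorem~\ref{prop:coupled-comparison}, with $G=\Omega$, $t_0=0$ and any $t_1>0$. Both $u_{k_i}$ share the same boundary condition, so \eqref{eq:comparison-boundary-order} holds with equality. Boundedness follows from \eqref{eq:general-basic-bounds}.

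The regularity \eqref{eq:comparison-regularity} on $\Omega\times(s,t_1)$ comes from Theorem~\ref{thm:general-wellposedness}. There we have $u_k\in C^1((0,\infty);C(\overline\Omega))\cap C((0,\infty);W^{2,r})$, and the parabolic estimates give $u_k\in W^{2,1}_2(Q_{s,t_1})$. Moreover, $v_k\in C^1([0,\infty);C(\overline\Omega))$.

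The initial condition \eqref{eq:comparison-initial-order} follows from continuity at $t=0$ in $C(\overline\Omega)$:
\[
(\underline u-\overline u)_+(t)\to (u_0-u_0)_+=0,\qquad (\overline v-\underline v)_+(t)\to((\lambda-1)v_0)_+=0,
\]
uniformly, hence in $L^2$. Theorem~\ref{prop:coupled-comparison} then yields both inequalities a.e. in $\Omega\times(0,t_1)$. Since $t_1$ is arbitrary, they hold a.e. in $\Omega\times(0,\infty)$.

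Finally, we upgrade to every point of $\overline\Omega\times[0,\infty)$. All four functions are continuous on $\overline\Omega\times[0,\infty)$, since $u_k\in C([0,\infty);C(\overline\Omega))$. An inequality between continuous functions holding a.e. holds everywhere on the closure of the open set, which includes $\partial\Omega$ and $t=0$.

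The only genuinely nontrivial point is choosing the scaling exponent $1/m_2$. It is forced by making the $u$-equation exact, and it leaves the exponent condition as the sole requirement for the $v$-inequality. I expect no real obstacle; the verification of the regularity hypotheses is routine bookkeeping.
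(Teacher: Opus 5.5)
Your proposal is correct and is essentially the paper's argument. The paper uses the mirror-image rescaling: it sets $\widetilde v=(k_2/k_1)^{1/m_2}v_{k_2}$, treats $(u_{k_2},\widetilde v)$ as a lower pair for the $k_1$-system, and compares it with $(u_{k_1},v_{k_1})$ through Theorem~\ref{prop:coupled-comparison}, finishing by continuity; your version instead scales $v_{k_1}$ down and uses the $k_2$-system, with the same exponent condition deciding the $v$-inequality.
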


\begin{proof}
Fix $0<k_1<k_2$ and $T>0$, and set
\[
\lambda=\left(\frac{k_2}{k_1}\right)^{1/m_2},
\qquad
\widetilde v=\lambda v_{k_2}.
\]
Since $\lambda^{m_2}=k_2/k_1$, we have
\begin{equation}\label{eq:monotonicity-rescaled-system}
\begin{aligned}
\partial_tu_{k_2}
&=
\Delta u_{k_2}
-k_1u_{k_2}^{m_1}\widetilde v^{m_2},\\
\partial_t\widetilde v
&=
-k_1\lambda^{m_2-m_4+1}
u_{k_2}^{m_3}\widetilde v^{m_4}\\
&\ge
-k_1u_{k_2}^{m_3}\widetilde v^{m_4},
\end{aligned}
\end{equation}
where the last inequality follows from $\lambda>1$ and
\eqref{eq:monotonicity-exponent-condition}.
Moreover,
\[
u_{k_2}(\cdot,0)=u_{k_1}(\cdot,0)=u_0,
\qquad
v_{k_1}(\cdot,0)=v_0
\le\lambda v_0=\widetilde v(\cdot,0),
\]
and $u_{k_1}$ and $u_{k_2}$ satisfy the same homogeneous
boundary condition.
By \eqref{eq:monotonicity-rescaled-system}, we may apply
Theorem~\ref{prop:coupled-comparison} on $\Omega\times(0,T)$
with reaction coefficient $k_1$ and
\[
(\overline u,\overline v)=(u_{k_1},v_{k_1}),
\qquad
(\underline u,\underline v)=(u_{k_2},\widetilde v).
\]
We obtain
\[
u_{k_2}\le u_{k_1},
\qquad
v_{k_1}\le\widetilde v
\]
almost everywhere in $\Omega\times(0,T)$.
By Theorem~\ref{thm:general-wellposedness}, these functions are
continuous on $\overline\Omega\times[0,T]$, and hence the
inequalities hold throughout $\overline\Omega\times[0,T]$.
Since $k_1^{1/m_2}\widetilde v=k_2^{1/m_2}v_{k_2}$
and $T>0$ is arbitrary, the lemma follows.
\end{proof}

\section{A one-dimensional memory barrier}
\label{sec:one-dimensional-memory-barrier}

We now take
\[
m_1=p+\alpha,\qquad m_2=q,\qquad
m_3=p,\qquad m_4=q+1,
\qquad p,q\ge1,\quad \alpha\ge1.
\]
Set
\begin{equation}\label{eq:memory-variables}
R_k(x,t):=kv_k(x,t)^q,
\qquad
A_k(x,t):=\int_0^t u_k(x,s)^p\,ds.
\end{equation}
Then \eqref{eq:general-four-exponent-system} gives
\begin{equation}\label{eq:transformed-critical-system}
\begin{cases}
\partial_tu_k=\Delta u_k-R_ku_k^{p+\alpha},\\
\partial_tR_k=-qu_k^pR_k^2,\\
u_k(\cdot,0)=u_0,\qquad R_k(\cdot,0)=kv_0^q,
\end{cases}
\end{equation}
with the same boundary condition for $u_k$.

Solving the second equation in
\eqref{eq:transformed-critical-system}, we obtain
\begin{equation}\label{eq:exact-memory-representation}
\begin{aligned}
R_k(x,t)
&=
\frac{kv_0(x)^q}
{1+qkv_0(x)^qA_k(x,t)},\\
v_k(x,t)^q
&=
\frac{v_0(x)^q}
{1+qkv_0(x)^qA_k(x,t)}.
\end{aligned}
\end{equation}
These identities also hold when $v_0(x)=0$, in which case
$v_k(x,t)=R_k(x,t)=0$.
At each point where $v_0(x)>0$,
\eqref{eq:exact-memory-representation} can also be written as
\begin{equation}\label{eq:reciprocal-memory-representation}
R_k(x,t)
=
\frac{1}{k^{-1}v_0(x)^{-q}+qA_k(x,t)}.
\end{equation}

Since $m_2=m_4-1$,
Lemma~\ref{lem:reaction-rate-monotonicity} yields
\begin{equation}\label{eq:critical-reaction-rate-monotonicity}
u_{k_2}\le u_{k_1},
\qquad
R_{k_1}\le R_{k_2},
\qquad 0<k_1<k_2,
\end{equation}
on $\overline\Omega\times[0,\infty)$.

We next construct a one-dimensional barrier adapted to the time-integral term 
in the transformed equation.

\begin{lemma}\label{lem:one-dimensional-memory-barrier}
Let $p,q\ge1$ and $\alpha\ge1$, and set
\begin{equation}\label{eq:memory-barrier-constants}
a_w:=1+\frac{4pe^{1/4}}{q},
\qquad
c_w:=\sqrt{2a_w+\frac{4e^{1/4}}{q}+1}.
\end{equation}
For $L>0$ and $0<B\le1$, define
\begin{equation}\label{eq:one-dimensional-memory-barrier}
W(y,t)
=
B\left(1-\frac{y}{L}\right)^2
\exp\left(-\frac{c_w y}{\sqrt t}-\frac{a_w y^2}{t}\right)
\end{equation}
for $0\le y\le L$ and $t>0$, and set
\[
\mathcal J(y,t):=\int_0^t W(y,s)^p\,ds.
\]
Then
\begin{equation}\label{eq:one-dimensional-barrier-inequality}
\partial_tW
\le
\partial_{yy}W-\frac{W^{p+\alpha}}{q\mathcal J}
\quad\text{in }(0,L)\times(0,\infty).
\end{equation}
\end{lemma}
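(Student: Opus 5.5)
The plan is a direct computation: split $\partial_{yy}W-\partial_tW$ into a positive diffusion surplus, bound $\mathcal J$ from below by integrating over a short time window that adapts to $z:=y/\sqrt t$, and compare the two. Throughout, write
\[
W=B\varphi(y)E(y,t),\qquad \varphi(y)=\Bigl(1-\frac yL\Bigr)^2,\qquad E=\exp\Bigl(-\frac{c_wy}{\sqrt t}-\frac{a_wy^2}{t}\Bigr),
\]
and express everything in terms of $z$.

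\textbf{Step 1 (diffusion surplus).} First,
\[
\partial_tW=\frac Wt\Bigl(\frac{c_wz}{2}+a_wz^2\Bigr).
\]
For the spatial derivatives, use $\partial_{yy}W=W\bigl[(\partial_y\log W)^2+\partial_{yy}\log W\bigr]$ with
\[
\partial_y\log W=-\frac{2}{L-y}-\frac{c_w}{\sqrt t}-\frac{2a_wy}{t},
\qquad
\partial_{yy}\log W=-\frac{2}{(L-y)^2}-\frac{2a_w}{t}.
\]
All three terms of $\partial_y\log W$ have the same sign, so the cross terms in the square are nonnegative. Also $4/(L-y)^2-2/(L-y)^2\ge0$. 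Hence
\[
\partial_{yy}W-\partial_tW\ \ge\ \frac Wt\Bigl[c_w^2-2a_w+\Bigl(4a_wc_w-\frac{c_w}2\Bigr)z+3a_wz^2\Bigr]
\ \ge\ \frac Wt\Bigl[1+\frac{4e^{1/4}}q+3a_wz^2\Bigr],
\]
using $c_w^2-2a_w=1+4e^{1/4}/q$ and $4a_w\ge 1/2$. It therefore suffices to prove
\[
\frac{W^{p+\alpha}}{q\mathcal J}\le \frac Wt\Bigl(1+\frac{4e^{1/4}}q+3a_wz^2\Bigr).
\]

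\textbf{Step 2 (lower bound for $\mathcal J$).} Here $y$ is fixed and $W(y,s)$ is increasing in $s$. I restrict the integral to $s\in[(1-\theta)t,t]$ with $\theta\in(0,1/2]$ to be chosen depending on $z$. On that window,
\[
W(y,s)^p\ge W(y,t)^p\exp\Bigl(-p\bigl[c_wz\bigl((1-\theta)^{-1/2}-1\bigr)+a_wz^2\tfrac{\theta}{1-\theta}\bigr]\Bigr).
\]
Using $(1-\theta)^{-1/2}-1\le\theta$ and $\theta/(1-\theta)\le2\theta$ for $\theta\le1/2$, the exponent is at most $p\theta(c_wz+2a_wz^2)$. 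I would choose
\[
\theta=\Bigl(2+4p(c_wz+2a_wz^2)\Bigr)^{-1},
\]
so the exponent is at most $1/4$. This gives
\[
\mathcal J\ge \theta t\,e^{-1/4}W^p .
\]

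\textbf{Step 3 (conclusion).} Since $W\le B\le1$ and $\alpha\ge1$, we have $W^{\alpha}\le1$. Step 2 then gives
\[
\frac{W^{p+\alpha}}{q\mathcal J}\le \frac{e^{1/4}}{q\theta}\,\frac Wt
=\frac{e^{1/4}}{q}\bigl(2+4pc_wz+8pa_wz^2\bigr)\frac Wt .
\]
The constant term $2e^{1/4}/q$ is dominated by $4e^{1/4}/q$. The quadratic term $8pa_we^{1/4}z^2/q$ must be absorbed by $3a_wz^2$, and the linear term $4pc_we^{1/4}z/q$ must be absorbed by the linear surplus $(4a_wc_w-c_w/2)z$ that Step 1 discarded, which I would keep.

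\textbf{Main obstacle.} The delicate point is this final absorption, because the constants must close exactly with the given $a_w=1+4pe^{1/4}/q$. The linear term is harmless: since $4a_w-\tfrac12\ge 4pe^{1/4}/q$, it is absorbed with room to spare. The quadratic term, however, needs $3a_w\ge 8pe^{1/4}/q$, and this fails when $p/q$ is large. So the choice of $\theta$ must be tuned more carefully. I would first try a sharper bound on the exponent, for instance $\theta/(1-\theta)\le\theta(1+2\theta)$ together with a smaller multiple of $z^2$ in $\theta^{-1}$, so that the $z^2$ coefficient of $e^{1/4}/(q\theta)$ becomes $4pa_we^{1/4}/q$ or less. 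Failing that, I would use the factor $W^{\alpha}\le E\le e^{-a_wz^2}$, which I have so far thrown away. It decays in $z$ and kills any polynomial growth in $z$ for large $z$, so only a bounded range of $z$ remains, where the constants $1+4e^{1/4}/q$ suffice.
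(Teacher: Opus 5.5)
\textbf{Gap: Step 1 discards the term that closes the argument.}

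Your overall strategy matches the paper's. You use the self-similar variable $z=y/\sqrt t$, discard the terms coming from $(1-y/L)^2$ by sign, and bound $\mathcal J$ from below by integrating over a time window of relative length of order $(1+pc_wz+2pa_wz^2)^{-1}$. As written, however, the argument does not close, and the obstacle you identify is created in Step~1.

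The square $\bigl(\frac{c_w}{\sqrt t}+\frac{2a_wy}{t}\bigr)^2=\frac1t(c_w+2a_wz)^2$ has quadratic part $4a_w^2z^2/t$, not $4a_wz^2/t$. The correct surplus is therefore
\[
\partial_{yy}W-\partial_tW\ge\frac Wt\Bigl[c_w^2-2a_w+\Bigl(4a_w-\tfrac12\Bigr)c_wz+a_w(4a_w-1)z^2\Bigr].
\]
With this, your own $\theta$ already works. Since $4a_w-1=3+16pe^{1/4}/q$, the coefficient $a_w(4a_w-1)$ dominates $8pa_we^{1/4}/q$. The constant and linear terms are absorbed as you say.

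This is how the paper closes the argument. It shows $F^p/G\le C_0(1+pc_w\eta+2pa_w\eta^2)$ with $C_0=4e^{1/4}$, using the window $[1-\delta,1]$ with $\delta=1/(4\ell)$. The leftover quadratic coefficient is then $4a_w^2-a_w-2pa_wC_0/q=a_w(2a_w+1)$, which depends on exactly the $a_w^2$ you dropped. Also, in Step~3 you need $W^\alpha\le W$, not $W^\alpha\le 1$, to produce the factor $W/t$.

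\textbf{Why neither proposed repair works.} Neither repair can rescue the lossy bound $3a_wz^2$.
\begin{itemize}
\item Sharpening $\theta$ fails in general. The integrand $W(y,s)^p$ of $\mathcal J$ concentrates near $s=t$ at rate $(pa_wz^2+pc_wz/2)/t$. Laplace's method then gives $tW(y,t)^p/\mathcal J(y,t)\sim pa_wz^2$ as $z\to\infty$. So no lower bound on $\mathcal J$ can make the $z^2$ coefficient smaller than $pa_w/q$, and $pa_w/q>3a_w$ whenever $p>3q$.
\item Using $W^\alpha\le E\le e^{-a_wz^2}$ fails for $\alpha=1$. The single factor $W$ obtained from $W^\alpha$ is exactly what is needed to match $W/t$ on the right, so no decaying factor remains.
\end{itemize}
Consequently, the reduced inequality at the end of your Step~1 is false for $\alpha=1$, $p>3q$ and large $z$ (fix $y$ and let $t\downarrow0$). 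The fix is to keep the $4a_w^2z^2$ term.
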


\begin{proof}
Set $C_0=4e^{1/4}$. For $\eta\ge0$, define
\[
F(\eta)=e^{-c_w\eta-a_w\eta^2},
\qquad
G(\eta)=\int_0^1
F\left(\frac{\eta}{\sqrt s}\right)^p\,ds.
\]
We first estimate $F^p/G$.
Fix $\eta\ge0$, and set
\[
\ell=1+pc_w\eta+2pa_w\eta^2,
\qquad
\delta=\frac{1}{4\ell}.
\]
Since $0<\delta\le1/4$, every $s\in[1-\delta,1]$
satisfies $s\ge3/4$ and
\[
s^{-1/2}-1
\le1-s,
\qquad
s^{-1}-1\le2(1-s).
\]
Consequently,
\[
pc_w\eta(s^{-1/2}-1)+pa_w\eta^2(s^{-1}-1)
\le
(pc_w\eta+2pa_w\eta^2)\delta
\le\frac14,
\]
and hence
\[
F\left(\frac{\eta}{\sqrt s}\right)^p
\ge e^{-1/4}F(\eta)^p
\qquad (1-\delta\le s\le1).
\]
Integrating over $[1-\delta,1]$ gives
\[
G(\eta)\ge\delta e^{-1/4}F(\eta)^p.
\]
Thus
\begin{equation}\label{eq:memory-profile-quotient-bound}
\frac{F(\eta)^p}{G(\eta)}
\le
C_0(1+pc_w\eta+2pa_w\eta^2)
\qquad (\eta\ge0).
\end{equation}

Since
\[
F'=-(c_w+2a_w\eta)F,
\qquad
F''=\bigl((c_w+2a_w\eta)^2-2a_w\bigr)F,
\]
\eqref{eq:memory-barrier-constants} and
\eqref{eq:memory-profile-quotient-bound} yield
\begin{equation}\label{eq:memory-profile-inequality}
\begin{aligned}
F''+\frac{\eta}{2}F'
-\frac{F^{p+1}}{qG}
&\ge
\left[
1+c_w\left(3a_w+\frac12\right)\eta
+a_w(2a_w+1)\eta^2
\right]F\\
&\ge0
\qquad (\eta\ge0).
\end{aligned}
\end{equation}

Now set
\[
\chi(y)=\left(1-\frac{y}{L}\right)^2,
\qquad
\eta=\frac{y}{\sqrt t}.
\]
By \eqref{eq:one-dimensional-memory-barrier},
\[
W(y,t)=B\chi(y)F(\eta),
\qquad
\mathcal J(y,t)=tB^p\chi(y)^pG(\eta).
\]
Therefore, in $(0,L)\times(0,\infty)$,
\[
\begin{aligned}
\partial_{yy}W-\partial_tW
-\frac{W^{p+1}}{q\mathcal J}
&=
\frac{B\chi}{t}
\left(
F''+\frac{\eta}{2}F'
-\frac{F^{p+1}}{qG}
\right)\\
&\quad
+\frac{2B\chi'}{\sqrt t}F'
+B\chi''F
\ge0,
\end{aligned}
\]
where the last inequality follows from
\eqref{eq:memory-profile-inequality},
$\chi'\le0$, $\chi''\ge0$, and $F'\le0$.
Since $0<W\le B\le1$ and $\alpha\ge1$,
we have $W^{p+\alpha}\le W^{p+1}$, which proves
\eqref{eq:one-dimensional-barrier-inequality}.
The lemma follows.
\end{proof}

To use Theorem~\ref{prop:coupled-comparison}, we introduce
a second component associated with $W$.

\begin{lemma}\label{lem:regularized-barrier-system}
Let $W$ and $\mathcal J$ be as in
Lemma~\ref{lem:one-dimensional-memory-barrier}.
For $M>0$ and $k>0$, define
\begin{equation}\label{eq:regularized-barrier-component}
Z_k(y,t)
:=
\frac{M}{\left(1+qkM^q\mathcal J(y,t)\right)^{1/q}},
\qquad 0\le y\le L,\quad t>0.
\end{equation}
Then
\begin{equation}\label{eq:regularized-barrier-system}
\begin{cases}
\partial_tW
\le
\partial_{yy}W-kW^{p+\alpha}Z_k^q,\\
\partial_tZ_k
=
-kW^pZ_k^{q+1}
\end{cases}
\quad\text{in }(0,L)\times(0,\infty).
\end{equation}
\end{lemma}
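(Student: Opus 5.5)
The plan is a direct computation: the second equation in \eqref{eq:regularized-barrier-system} is an exact ODE identity, and the first follows from Lemma~\ref{lem:one-dimensional-memory-barrier} once $kZ_k^q$ is bounded by $1/(q\mathcal J)$. We first check that $\mathcal J$ is well defined and differentiable in $t$. Since $0<W\le1$ and $W(y,s)\to0$ as $s\downarrow0$ for $y>0$ (and $W(0,s)=B$), the integrand $s\mapsto W(y,s)^p$ is bounded and continuous on $(0,t]$. Hence $\mathcal J(y,t)$ is finite and positive for $t>0$, and $\partial_t\mathcal J=W^p$ by the fundamental theorem of calculus.

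Writing $\Theta:=1+qkM^q\mathcal J$, we have $Z_k=M\Theta^{-1/q}$ and $Z_k^q=M^q/\Theta$. Differentiating in $t$ gives
\[
\partial_tZ_k=-\frac1q M\Theta^{-1/q-1}\,qkM^qW^p
=-kW^p\,M^{q+1}\Theta^{-(q+1)/q}=-kW^pZ_k^{q+1},
\]
which is the second relation in \eqref{eq:regularized-barrier-system}, with equality.

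For the first relation we bound the reaction coefficient:
\[
kZ_k^q=\frac{kM^q}{1+qkM^q\mathcal J}\le\frac{kM^q}{qkM^q\mathcal J}=\frac1{q\mathcal J},
\]
using $\mathcal J>0$. Since $W^{p+\alpha}\ge0$, this gives $-kW^{p+\alpha}Z_k^q\ge -W^{p+\alpha}/(q\mathcal J)$. Combining with \eqref{eq:one-dimensional-barrier-inequality} yields
\[
\partial_tW\le\partial_{yy}W-\frac{W^{p+\alpha}}{q\mathcal J}\le\partial_{yy}W-kW^{p+\alpha}Z_k^q,
\]
which completes the proof. The only point needing any care is the positivity and differentiability of $\mathcal J$ for $t>0$, handled in the first step.
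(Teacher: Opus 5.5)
Your proposal is correct and follows the paper's own proof essentially verbatim. You bound $kZ_k^q\le 1/(q\mathcal J)$ to pass from the barrier inequality of Lemma~\ref{lem:one-dimensional-memory-barrier} to the first relation, and you differentiate $Z_k$ using $\partial_t\mathcal J=W^p$ to obtain the second. Your extra check that $\mathcal J$ is finite, positive, and differentiable in $t$ on $(0,L)\times(0,\infty)$ is a small point of care that the paper leaves implicit.
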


\begin{proof}
By \eqref{eq:regularized-barrier-component},
\[
kZ_k^q
=
\frac{1}{(kM^q)^{-1}+q\mathcal J}
\le
\frac{1}{q\mathcal J}
\quad\text{in }(0,L)\times(0,\infty).
\]
Hence \eqref{eq:one-dimensional-barrier-inequality} gives
\[
\partial_tW
\le
\partial_{yy}W-\frac{W^{p+\alpha}}{q\mathcal J}
\le
\partial_{yy}W-kW^{p+\alpha}Z_k^q.
\]
Since $\partial_t\mathcal J=W^p$, differentiating
\eqref{eq:regularized-barrier-component} yields
\[
\partial_tZ_k
=
-kM^{q+1}W^p
\left(1+qkM^q\mathcal J\right)^{-1-1/q}
=
-kW^pZ_k^{q+1}.
\]
The lemma follows.
\end{proof}

\section{Weighted annular barriers and proof of the main theorem}
\label{sec:annular-barriers-main-results}

We extend the one-dimensional barrier to annuli and use it to
propagate a positive lower bound through $\Omega$.
We then prove Theorems~\ref{thm:positivity-decay} and
\ref{thm:memory-limit}.

For $x_0\in\mathbb R^N$ and $0<d_1<d_2$, set
\[
\mathcal A_{d_1,d_2}(x_0)
:=
\{x\in\mathbb R^N:d_1<|x-x_0|<d_2\}.
\]

\begin{lemma}\label{lem:weighted-annular-barrier}
Let $\mathcal A=\mathcal A_{d_1,d_2}(x_0)$, and choose
\[
\sigma\ge\frac{N-1}{d_1}.
\]
Let $W$ and $\mathcal J$ be as in
Lemma~\ref{lem:one-dimensional-memory-barrier}, with
$L=d_2-d_1$ and $0<B\le1$.
For $x\in\overline{\mathcal A}$, set
\[
r=|x-x_0|,
\qquad
h(r)=e^{-\sigma(r-d_1)}.
\]
For $M>0$ and $k>0$, define
\begin{equation}\label{eq:weighted-annular-barrier}
\begin{aligned}
U(x,t)&:=h(r)W(r-d_1,t),\\
\mathcal I(x,t)&:=\int_0^t U(x,s)^p\,ds,\\
V_k(x,t)&:=
\frac{M}{\left(1+qkM^q\mathcal I(x,t)\right)^{1/q}},
\end{aligned}
\qquad x\in\overline{\mathcal A},\quad t>0.
\end{equation}
Then
\begin{equation}\label{eq:weighted-annular-system}
\begin{cases}
\partial_tU\le\Delta U-kU^{p+\alpha}V_k^q,\\
\partial_tV_k=-kU^pV_k^{q+1}
\end{cases}
\quad\text{in }\mathcal A\times(0,\infty).
\end{equation}
\end{lemma}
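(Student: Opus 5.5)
The plan is to reduce both inequalities in \eqref{eq:weighted-annular-system} to the one-dimensional inequality \eqref{eq:one-dimensional-barrier-inequality} of Lemma~\ref{lem:one-dimensional-memory-barrier}. The radial terms are handled by their signs, and the factor $h$ is absorbed into the reaction term using $h\le1$ and $\alpha\ge1$. Throughout, write $y=r-d_1\in(0,L)$, and note that $W>0$ and $\mathcal J>0$ there.

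\emph{The $V_k$-equation.} Since $\partial_t\mathcal I=U^p$, differentiating the definition of $V_k$ gives $\partial_tV_k=-kU^pV_k^{q+1}$. This is exactly the computation in the proof of Lemma~\ref{lem:regularized-barrier-system}, with $\mathcal J$ replaced by $\mathcal I$. The same computation also gives
\[
kV_k^q=\frac{1}{(kM^q)^{-1}+q\mathcal I}\le\frac{1}{q\mathcal I},
\]
and we have $\mathcal I=h^p\mathcal J$ because $h$ does not depend on $t$.

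\emph{The $U$-inequality.} Since $U$ is radial, $\Delta U=U_{rr}+\frac{N-1}{r}U_r$. Using $h'=-\sigma h$, we get $U_r=h(W_y-\sigma W)$ and $U_{rr}=h(W_{yy}-2\sigma W_y+\sigma^2W)$. Hence
\[
\Delta U-\partial_tU
=h\Bigl[W_{yy}-W_t+\sigma W\Bigl(\sigma-\frac{N-1}{r}\Bigr)+W_y\Bigl(\frac{N-1}{r}-2\sigma\Bigr)\Bigr].
\]
The two extra terms are handled as follows.
\begin{itemize}
\item Since $r\ge d_1$ and $\sigma\ge(N-1)/d_1$, we have $\sigma-\frac{N-1}{r}\ge0$. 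As $W\ge0$, the third term is nonnegative.
\item For the same reason, $\frac{N-1}{r}-2\sigma\le-\sigma\le0$. Also $W_y\le0$, because $W=B\chi F(y/\sqrt t)$ with $\chi'\le0$ and $F'\le0$. So the fourth term is nonnegative as well.
\end{itemize}
Combining this with \eqref{eq:one-dimensional-barrier-inequality} gives
\[
\Delta U-\partial_tU\ge h\,\frac{W^{p+\alpha}}{q\mathcal J}.
\]

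It remains to compare this with the reaction term. Using the bound on $kV_k^q$ from the first step,
\[
kU^{p+\alpha}V_k^q\le\frac{h^{p+\alpha}W^{p+\alpha}}{q\,h^p\mathcal J}
=h^{\alpha}\,\frac{W^{p+\alpha}}{q\mathcal J}
\le h\,\frac{W^{p+\alpha}}{q\mathcal J}.
\]
The last step uses $0<h\le1$ and $\alpha\ge1$. This proves the first line of \eqref{eq:weighted-annular-system}.

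The only point needing care is the sign of the first-order radial terms. The choice $\sigma\ge(N-1)/d_1$ and the monotonicity $W_y\le0$ make both the $W$ and $W_y$ coefficients favorable. No other obstacle is expected.
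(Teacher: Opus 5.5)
Your proposal is correct and follows essentially the same argument as the paper. You expand the radial Laplacian of $h(r)W(r-d_1,t)$, discard the extra radial terms using $\sigma\ge(N-1)/d_1$, $\partial_yW\le0$ and $W\ge0$, absorb the weight via $\mathcal I=h^p\mathcal J$, $kV_k^q\le 1/(q\mathcal I)$ and $h^\alpha\le h$, and obtain the $V_k$-equation by differentiating with $\partial_t\mathcal I=U^p$.
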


\begin{proof}
In the following computations, $W$, $\mathcal J$, and
the derivatives of $W$ are evaluated at $(r-d_1,t)$.
Since $U$ is radial about $x_0$,
\[
\begin{aligned}
\Delta U-\partial_tU
=h(r)\Bigg[
&\partial_{yy}W-\partial_tW
+\left(\frac{N-1}{r}-2\sigma\right)\partial_yW\\
&+\sigma\left(\sigma-\frac{N-1}{r}\right)W
\Bigg].
\end{aligned}
\]
By \eqref{eq:one-dimensional-memory-barrier},
$\partial_yW\le0$ and $W\ge0$.
Since $0\le(N-1)/r\le(N-1)/d_1\le\sigma$, we obtain
\[
\Delta U-\partial_tU
\ge h(r)(\partial_{yy}W-\partial_tW).
\]
Moreover, $\mathcal I=h(r)^p\mathcal J$, $0<h(r)\le1$,
and $\alpha\ge1$.
Thus \eqref{eq:one-dimensional-barrier-inequality} gives
\[
\Delta U-\partial_tU
\ge\frac{h(r) W^{p+\alpha}}{q\mathcal J}
\ge\frac{h(r)^\alpha W^{p+\alpha}}{q\mathcal J}
=\frac{U^{p+\alpha}}{q\mathcal I}.
\]

By \eqref{eq:weighted-annular-barrier},
\[
kV_k^q
=
\frac{kM^q}{1+qkM^q\mathcal I}
=
\frac{1}{(kM^q)^{-1}+q\mathcal I}
\le
\frac{1}{q\mathcal I}.
\]
Hence
\[
\partial_tU
\le
\Delta U-\frac{U^{p+\alpha}}{q\mathcal I}
\le
\Delta U-kU^{p+\alpha}V_k^q.
\]
Since $\partial_t\mathcal I=U^p$, differentiating
\eqref{eq:weighted-annular-barrier} yields
\[
\begin{aligned}
\partial_tV_k
&=
-\frac{M}{q}
\left(1+qkM^q\mathcal I\right)^{-1-1/q}
qkM^qU^p\\
&=
-kU^pV_k^{q+1}.
\end{aligned}
\]
These relations prove \eqref{eq:weighted-annular-system}.
The lemma follows.
\end{proof}

We next use Lemma~\ref{lem:weighted-annular-barrier} to propagate
a positive lower bound from one ball to a nearby ball.

\begin{lemma}\label{lem:ball-to-ball-propagation}
Assume the hypotheses of Theorem~\ref{thm:positivity-decay}.
Let $R>0$ and $x_0,x_1\in\Omega$ satisfy
\begin{equation}\label{eq:propagation-geometry}
\overline{B_{3R/2}(x_0)}\subset\Omega,
\qquad
|x_1-x_0|\le\frac R4.
\end{equation}
Suppose that, for some $B_0>0$ and $0\le t_0<T$,
\begin{equation}\label{eq:propagation-source-bound}
u_k\ge B_0
\quad\text{on }\overline{B_R(x_0)}\times[t_0,T]
\end{equation}
for every $k>0$.
Then, for each $0<\theta<T-t_0$, there exists $B_1>0$,
independent of $k$, such that
\begin{equation}\label{eq:propagation-target-bound}
u_k\ge B_1
\quad\text{on }\overline{B_R(x_1)}\times[t_0+\theta,T]
\end{equation}
for every $k>0$.
\end{lemma}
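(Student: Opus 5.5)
We compare $(u_k,v_k)$ with the weighted annular barrier of Lemma~\ref{lem:weighted-annular-barrier}, shifted in time to start at $t_0$. The barrier lives on the annulus $\mathcal A=\mathcal A_{d_1,d_2}(x_0)$ with
\[
d_1=R,\qquad d_2=\tfrac{3R}{2},\qquad \sigma=\tfrac{N-1}{R},
\]
so that $\overline{\mathcal A}\subset\overline{B_{3R/2}(x_0)}\subset\Omega$. On $\overline{B_R(x_0)}$ the hypothesis \eqref{eq:propagation-source-bound} already gives $u_k\ge B_0$. It therefore suffices to bound $u_k$ from below on $\overline{\mathcal A_{R,5R/4}(x_0)}\times[t_0+\theta,T]$, because $|x_1-x_0|\le R/4$ gives $\overline{B_R(x_1)}\subset\overline{B_{5R/4}(x_0)}$.

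Set $B=\min\{B_0,1\}$, $L=R/2$, and define $U(x,t)=h(r)W(r-d_1,t-t_0)$ and $V_k$ as in \eqref{eq:weighted-annular-barrier}, with time shifted by $t_0$. Take $M=\lVert v_0\rVert_{L^\infty(\Omega)}$ (or any larger constant). We apply Theorem~\ref{prop:coupled-comparison} on $G=\mathcal A$, $Q=\mathcal A\times(t_0,T)$, with lower pair $(\underline u,\underline v)=(U,V_k)$ and upper pair $(\overline u,\overline v)=(u_k,v_k)$. The differential inequalities hold by \eqref{eq:weighted-annular-system}. The hypotheses to check are:
\begin{itemize}
\item \emph{Boundary ordering.} On the inner sphere $r=R$ we have $U\le B\le B_0\le u_k$. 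On the outer sphere $r=3R/2$ we have $U=0\le u_k$, because $\chi(L)=0$.
\item \emph{Initial ordering of $v$.} Since $\mathcal I\ge0$ we get $V_k(\cdot,t_0)=M\ge v_0\ge v_k(\cdot,t_0)$. More carefully, $V_k\to M$ as $t\downarrow t_0$ and $v_k$ is continuous, so $(v_k-V_k)_+\to0$ in $L^2$.
\item \emph{Initial ordering of $u$.} For $r>d_1$, $W(r-d_1,s)\to0$ as $s\downarrow0$. Since $U\le1$, dominated convergence gives $\lVert U(t)\rVert_{L^2(\mathcal A)}\to0$, hence $(U-u_k)_+\to0$ in $L^2$.
\item \emph{Regularity \eqref{eq:comparison-regularity}.} This is the main technical point. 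For $s>t_0$, $U$ is smooth and bounded on $\overline{\mathcal A}\times[s,T]$. It is $C^2$ in $r$ up to $r=3R/2$, since $\chi$ is a quadratic polynomial, and away from the origin since $d_1>0$. $V_k$ is smooth in $t$ with bounded derivative $-kU^pV_k^{q+1}$. The $u_k,v_k$ regularity comes from Theorem~\ref{thm:general-wellposedness}.
\end{itemize}
One subtlety: Lemma~\ref{lem:weighted-annular-barrier} gives the inequality only in the open annulus. That suffices, since the comparison requires it only almost everywhere.

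The comparison yields $u_k\ge U$ on $\mathcal A\times(t_0,T)$, and then everywhere by continuity. For $R\le r\le 5R/4$ and $t-t_0\in[\theta,T-t_0]$ we have $y=r-R\le R/4$ and $\chi(y)\ge1/4$. Also $h(r)\ge e^{-\sigma R/4}$ and $F(y/\sqrt{t-t_0})\ge\exp\bigl(-c_wR/(4\sqrt\theta)-a_wR^2/(16\theta)\bigr)$. Hence
\[
u_k\ge B_1:=\min\Bigl\{B_0,\ \tfrac{B}{4}\,e^{-(N-1)/4}\exp\Bigl(-\tfrac{c_wR}{4\sqrt\theta}-\tfrac{a_wR^2}{16\theta}\Bigr)\Bigr\}.
\]
This bound holds on $\overline{B_{5R/4}(x_0)}\times[t_0+\theta,T]$, and therefore on $\overline{B_R(x_1)}\times[t_0+\theta,T]$. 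It is independent of $k$ because $k$ enters only through $V_k$, which was never needed in the lower bound.

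The step I expect to need the most care is the initial-trace verification together with the regularity of $U$ near $t=t_0$. $W$ is singular at $t=0$ along $y=0$, so the convergence in \eqref{eq:comparison-initial-order} must be checked in $L^2$ rather than pointwise. This is precisely why the comparison principle was stated with $L^2$ initial ordering and regularity only on $(s,t_1)$ for $s>t_0$.
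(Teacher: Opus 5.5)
Your proposal is correct and follows essentially the same route as the paper. Like the paper, you compare with the time-shifted weighted annular barrier on $\mathcal A_{R,3R/2}(x_0)$ with $\sigma=(N-1)/R$ and $M=\|v_0\|_{L^\infty(\Omega)}$, checking the hypotheses of Theorem~\ref{prop:coupled-comparison} through the inner/outer boundary ordering and the $L^2$ initial-trace argument, and you arrive at the same explicit constant $B_1$; taking the minimum with $B_0$ is redundant, since $B_1\le B/4\le B_0$ already.
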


\begin{proof}
Set
\[
\mathcal A=\mathcal A_{R,3R/2}(x_0),
\qquad
B=\min\{1,B_0\},
\qquad
M=\|v_0\|_{L^\infty(\Omega)}.
\]
Fix $k>0$, and let $U$, $\mathcal I$, and $V_k$ be given by
Lemma~\ref{lem:weighted-annular-barrier}, with
\[
d_1=R,\qquad d_2=\frac{3R}{2},
\qquad \sigma=\frac{N-1}{R}.
\]
On $Q=\mathcal A\times(t_0,T)$, define
\[
\underline u(x,t)=U(x,t-t_0),
\qquad
\underline v(x,t)=V_k(x,t-t_0).
\]
By \eqref{eq:weighted-annular-system}, this pair satisfies
\eqref{eq:comparison-lower-pair}, with
$m_1=p+\alpha$, $m_2=q$, $m_3=p$, and $m_4=q+1$.

We verify the remaining hypotheses of
Theorem~\ref{prop:coupled-comparison}.
By \eqref{eq:weighted-annular-barrier},
\[
0\le\underline u\le B,
\qquad
0<\underline v\le M.
\]
Together with \eqref{eq:general-basic-bounds}, these bounds show
that $\underline u$, $\underline v$, $u_k$, and $v_k$
are nonnegative and bounded on $Q$.
By Theorem~\ref{thm:general-wellposedness} and
\eqref{eq:weighted-annular-barrier}, we also have
\[
\begin{aligned}
u_k,\underline u
&\in W^{2,1}_2(\mathcal A\times(s,T)),\\
v_k,\underline v
&\in W^{1,2}(s,T;L^2(\mathcal A))
\end{aligned}
\qquad (t_0<s<T).
\]
On the inner and outer boundaries, respectively,
\[
\begin{aligned}
\underline u=B\le u_k
&\quad\text{when }|x-x_0|=R,\\
\underline u=0\le u_k
&\quad\text{when }|x-x_0|=\frac{3R}{2},
\end{aligned}
\qquad t_0<t<T,
\]
by \eqref{eq:propagation-source-bound} and nonnegativity.

Since $u_k\ge0$ and $v_k\le M$,
\eqref{eq:weighted-annular-barrier} and dominated convergence give
\[
\|(\underline u(t)-u_k(t))_+\|_{L^2(\mathcal A)}
+
\|(v_k(t)-\underline v(t))_+\|_{L^2(\mathcal A)}
\longrightarrow0
\qquad(t\downarrow t_0).
\]
We may therefore apply Theorem~\ref{prop:coupled-comparison}
with $(\overline u,\overline v)=(u_k,v_k)$.
By continuity, its conclusion gives
\begin{equation}\label{eq:annular-comparison}
u_k(x,t)\ge U(x,t-t_0),
\qquad
v_k(x,t)\le V_k(x,t-t_0)
\end{equation}
on $\overline{\mathcal A}\times(t_0,T]$.

For $R\le|x-x_0|\le5R/4$ and $t_0+\theta\le t\le T$,
\eqref{eq:one-dimensional-memory-barrier} and
\eqref{eq:weighted-annular-barrier} give
\[
U(x,t-t_0)
\ge
B_1:=
\frac B4
\exp\left(
-\frac{N-1}{4}
-\frac{c_wR}{4\sqrt\theta}
-\frac{a_wR^2}{16\theta}
\right)>0.
\]
Since $B_1\le B_0$ and
\[
\overline{B_R(x_1)}
\subset
\overline{B_{5R/4}(x_0)},
\]
\eqref{eq:propagation-source-bound} and
\eqref{eq:annular-comparison} prove
\eqref{eq:propagation-target-bound}.
The lemma follows.
\end{proof}

Applying Lemma~\ref{lem:ball-to-ball-propagation} along a finite chain
of balls yields the following uniform estimates.

\begin{theorem}\label{thm:positivity-decay}
Assume the hypotheses stated in Section~\ref{sec:introduction}.
Then, for every compact set $K\Subset\Omega$ and every
$0<\tau<T<\infty$, there exist constants
\[
\rho_{K,\tau,T}>0,
\qquad
C_{K,\tau,T}>0,
\]
independent of $k$, such that
\begin{align}
u_k(x,t)&\ge\rho_{K,\tau,T},
&&(x,t)\in K\times[\tau,T],
\label{eq:main-positive-lower-bound}\\
0\le v_k(x,t)&\le C_{K,\tau,T}k^{-1/q},
&&(x,t)\in K\times[\tau,T].
\label{eq:main-v-decay}
\end{align}
In particular, for every $x\in\Omega$ and $t>0$,
\[
\liminf_{k\to\infty}u_k(x,t)>0,
\qquad
v_k(x,t)\longrightarrow0.
\]
\end{theorem}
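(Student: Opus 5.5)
The plan is to first produce a starting ball on which $u_k$ has a uniform positive lower bound. Then we propagate that bound by Lemma~\ref{lem:ball-to-ball-propagation} along a chain of balls to cover $K$. Finally, we convert the lower bound into decay of $v_k$ through the exact representation \eqref{eq:exact-memory-representation}.

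\textbf{Starting ball.} Since $u_0\not\equiv0$ is continuous and $u_0v_0\equiv0$, there is a point $z\in\Omega$ with $u_0(z)>0$. Hence there is a ball $\overline{B_{2r}(z)}\subset\Omega$ with $u_0\ge c_0>0$ and $v_0\equiv0$ on it. By \eqref{eq:exact-memory-representation}, $v_k\equiv0$ there for all $t$, so $u_k$ solves the heat equation in $B_{2r}(z)\times(0,\infty)$ with $u_k\ge0$ on the lateral boundary. Comparison with the solution $\phi$ of the heat equation in $B_{2r}(z)$, with zero Dirichlet data and initial datum $c_0\psi$ for a cutoff $\psi\le1$ supported in $B_{2r}(z)$, gives $u_k\ge\phi$. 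Here $\phi$ is independent of $k$ and, by the strong maximum principle, satisfies $\phi\ge B_0>0$ on $\overline{B_r(z)}\times[0,T]$ (continuity at $t=0$ handles small times, since $\psi=1$ near $\overline{B_r(z)}$). Thus \eqref{eq:propagation-source-bound} holds with $t_0=0$.

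\textbf{Chain of balls.} Fix $K$ and $0<\tau<T$. Since $\Omega$ is connected and $K$ is compact, choose $R>0$ so small that $\overline{B_{3R/2}(x)}\subset\Omega$ for all $x$ in a compact connected set $K'\supset K\cup\{z\}$. One can take $K'$ to be a finite union of paths from $z$ to a finite cover of $K$. Also take $R\le r$, so that the starting bound holds on $\overline{B_R(z)}$. Cover $K$ by finitely many balls $B_R(y_j)$. Each $y_j$ is joined to $z$ by a chain $z=x^0,x^1,\dots,x^n=y_j$ in $K'$ with $|x^{i+1}-x^i|\le R/4$, and $n$ is bounded uniformly in $j$ by compactness. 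Apply Lemma~\ref{lem:ball-to-ball-propagation} $n$ times with $\theta=\tau/n$. Each step yields a $k$-independent bound $B_{i+1}>0$ on $\overline{B_R(x^{i+1})}\times[(i+1)\tau/n,T]$. Taking the minimum over the finitely many chains gives \eqref{eq:main-positive-lower-bound}.

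\textbf{Decay of $v_k$.} To get \eqref{eq:main-v-decay}, apply the first step with $K$ and time window $[\tau/2,T]$. This gives $u_k\ge\rho$ on $K\times[\tau/2,T]$, hence $A_k(x,t)\ge\rho^p\tau/2$ for $t\ge\tau$. Then \eqref{eq:exact-memory-representation} gives
\[
v_k^q\le\frac{v_0^q}{qkv_0^qA_k}\le\frac{2}{qk\rho^p\tau},
\]
with the case $v_0(x)=0$ trivial. This yields $C_{K,\tau,T}=(2/(q\rho^p\tau))^{1/q}$. The pointwise statements follow by taking $K=\{x\}$.

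The main obstacle is the bookkeeping in the chain step. The chains must stay at distance greater than $3R/2$ from $\partial\Omega$, and their length $n$ must be bounded uniformly so that the total time loss $n\theta$ stays below $\tau$. Choosing $K'$ as a fixed compact connected set and $R$ below half its distance to $\partial\Omega$ should settle both issues.
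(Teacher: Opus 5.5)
Your proposal is correct and follows the paper's proof essentially step by step: a heat-equation subsolution on a ball where $v_0\equiv0$, then repeated use of Lemma~\ref{lem:ball-to-ball-propagation} along a chain of balls, then the bound $kv_k^q\le 1/(qA_k)$ from \eqref{eq:exact-memory-representation}. The only difference is bookkeeping: you use a single radius $R$ together with a compact connected set $K'\supset K\cup\{z\}$, while the paper chooses a radius $R_x$ for each $x\in K$ along a polygonal path and then takes a finite subcover of $K$.
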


\begin{proof}
Fix a compact set $K\Subset\Omega$ and $0<\tau<T<\infty$.
By \eqref{eq:initial-assumptions}, choose $x_*\in\Omega$
and $r_*>0$ such that
\[
\overline{B_{4r_*}(x_*)}\subset\Omega,
\qquad
\min_{\overline{B_{4r_*}(x_*)}}u_0>0.
\]
Since $u_0v_0=0$, we have $v_0=0$ on this ball.
By \eqref{eq:general-basic-bounds}, $v_k=0$ there
for every $k>0$ and $t\ge0$.
Thus \eqref{eq:Pk-intro} gives
\[
\partial_tu_k=\Delta u_k
\quad\text{in }B_{4r_*}(x_*)\times(0,\infty).
\]

Choose a nonnegative function
$\varphi\in C_c^\infty(B_{4r_*}(x_*))$ such that
\[
\varphi\le u_0,
\qquad
\min_{\overline{B_{2r_*}(x_*)}}\varphi>0.
\]
Let $H$ solve
\[
\begin{cases}
\partial_tH=\Delta H
&\text{in }B_{4r_*}(x_*)\times(0,\infty),\\
H=0
&\text{on }\partial B_{4r_*}(x_*)\times(0,\infty),\\
H(\cdot,0)=\varphi
&\text{in }B_{4r_*}(x_*).
\end{cases}
\]
The comparison principle gives $H\le u_k$ for every $k>0$.
By the strong maximum principle, continuity, and the choice
of $\varphi$,
\[
B_*:=
\min_{\overline{B_{2r_*}(x_*)}\times[0,T]}H>0.
\]
Consequently,
\begin{equation}\label{eq:uniform-source-ball}
u_k\ge B_*
\quad\text{on }\overline{B_{2r_*}(x_*)}\times[0,T]
\end{equation}
for every $k>0$.

Fix $x\in K$.
Since $\Omega$ is connected, there exists a polygonal path
$\gamma:[0,1]\to\Omega$ joining $x_*$ to $x$.
Set
\[
\delta_x:=
\operatorname{dist}(\gamma([0,1]),\partial\Omega)>0,
\]
and choose
\[
0<R_x<\min\left\{2r_*,\frac{\delta_x}{2}\right\}.
\]
By uniform continuity of $\gamma$, choose finitely many points
$x_0,\ldots,x_m$ on its image such that
\[
x_0=x_*,
\qquad
x_m=x,
\qquad
|x_j-x_{j-1}|\le\frac{R_x}{4}
\quad(1\le j\le m).
\]
The choice of $R_x$ also gives
\[
\overline{B_{3R_x/2}(x_j)}\subset\Omega
\qquad(0\le j\le m).
\]
Set
\[
\theta_x=\frac{\tau}{m+1},
\qquad
t_j=j\theta_x
\quad(0\le j\le m).
\]
Starting from \eqref{eq:uniform-source-ball}, apply
Lemma~\ref{lem:ball-to-ball-propagation} successively along
this chain.
There exist constants $B_j>0$, independent of $k$, with
$B_0=B_*$, such that
\[
u_k\ge B_j
\quad\text{on }\overline{B_{R_x}(x_j)}\times[t_j,T]
\qquad(0\le j\le m)
\]
for every $k>0$.
Since $t_m<\tau$, we obtain
\[
u_k\ge B_m
\quad\text{on }\overline{B_{R_x}(x)}\times[\tau,T]
\]
for every $k>0$.
Taking a finite subcover of $K$ by the balls $B_{R_x}(x)$
and the minimum of the corresponding lower bounds proves
\eqref{eq:main-positive-lower-bound}.

Applying \eqref{eq:main-positive-lower-bound} with $\tau/2$
in place of $\tau$, choose $\rho>0$, independent of $k$, such that
\[
u_k\ge\rho
\quad\text{on }K\times[\tau/2,T].
\]
For $(x,t)\in K\times[\tau,T]$, we have
\[
A_k(x,t)
\ge\int_{\tau/2}^{\tau}u_k(x,s)^p\,ds
\ge\frac{\tau\rho^p}{2}.
\]
Hence \eqref{eq:exact-memory-representation} gives
\begin{equation}\label{eq:positive-time-reaction-bound}
0\le R_k(x,t)
=
\frac{kv_0(x)^q}{1+qkv_0(x)^qA_k(x,t)}
\le\frac{1}{qA_k(x,t)}
\le\frac{2}{q\tau\rho^p}.
\end{equation}
Since $v_k=k^{-1/q}R_k^{1/q}$, we conclude that
\[
0\le v_k(x,t)
\le
\left(\frac{2}{q\tau\rho^p}\right)^{1/q}k^{-1/q}
\quad\text{on }K\times[\tau,T],
\]
which proves \eqref{eq:main-v-decay}.
The theorem follows.
\end{proof}

We next extend the positive lower bound to the boundary in the
Neumann case.

\begin{lemma}\label{lem:neumann-boundary-estimates}
Assume the hypotheses stated in Section~\ref{sec:introduction}, with
\eqref{eq:neumann-bc}.
For every $0<\tau<T<\infty$, there exist constants
$\rho_{\tau,T},C_{\tau,T}>0$, independent of $k$, such that
\begin{equation}\label{eq:neumann-global-bounds}
u_k\ge\rho_{\tau,T},\qquad 0\le R_k\le C_{\tau,T}
\quad\text{on }\overline\Omega\times[\tau,T]
\end{equation}
for every $k>0$.
\end{lemma}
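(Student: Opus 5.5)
The bound on $R_k$ follows from the lower bound on $u_k$ exactly as in the proof of Theorem~\ref{thm:positivity-decay}. Suppose $u_k\ge\rho$ on $\overline\Omega\times[\tau/2,T]$ for all $k$. Then $A_k\ge\tau\rho^p/2$ on $\overline\Omega\times[\tau,T]$. The representation \eqref{eq:exact-memory-representation} gives $R_k\le 1/(qA_k)\le 2/(q\tau\rho^p)$, and this holds at every point of $\overline\Omega$, including boundary points where $v_0>0$. So everything reduces to a uniform positive lower bound for $u_k$ on $\overline\Omega\times[\tau,T]$ in the Neumann case.

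For this I would use a boundary-distance argument based on the annular barrier. Since $\partial\Omega$ is $C^2$, it satisfies a uniform interior ball condition with some radius $r_0>0$. Set $\Omega_\delta=\{x\in\Omega:\operatorname{dist}(x,\partial\Omega)\ge\delta\}$ for small $\delta$. Theorem~\ref{thm:positivity-decay} already gives $u_k\ge\rho_1$ on $\Omega_\delta\times[\tau/2,T]$ uniformly in $k$. Now fix a boundary point $z$ with interior normal $n(z)$. I would compare $u_k$ on a domain $G=\Omega\cap B_{d_2}(x_0)\setminus\overline{B_{d_1}(x_0)}$, using the barrier of Lemma~\ref{lem:weighted-annular-barrier} centred at a point $x_0=z+\lambda n(z)$ deep inside $\Omega_\delta$, with $B_{d_1}(x_0)\subset\Omega_\delta$. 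The obstacle is that this region meets $\partial\Omega$, where we have no Dirichlet information on $u_k$. Theorem~\ref{prop:coupled-comparison} does allow the Neumann ordering $\partial_\nu\underline u\le\partial_\nu u_k=0$ on that part of $\partial G$, so I need $\partial_\nu U\le0$ on $\partial\Omega\cap\partial G$, where $\nu$ is the outward normal of $\Omega$. Here $U=h(r)W(r-d_1,t)$ is radially nonincreasing, so $\nabla U=U_r\,(x-x_0)/r$ with $U_r\le0$. It therefore suffices that $(x-x_0)\cdot\nu(x)\ge0$ on $\partial\Omega\cap \overline{B_{d_2}(x_0)}$.

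For that geometric condition I would take the annulus small and located near $z$. Choose $x_0=z+\lambda n(z)$ with $\lambda$ small compared to $r_0$ and $d_2$ slightly larger than $\lambda$. Then $\partial\Omega\cap B_{d_2}(x_0)$ lies in a small neighbourhood of $z$ in which $\nu$ is close to $-n(z)$ and $x-x_0$ is close to $-\lambda n(z)$, so the inner product is positive. The inner ball $B_{d_1}(x_0)$ has to sit in $\Omega_\delta$, so I would choose $\delta$ after $\lambda$, say $d_1=\lambda/2$ and $\delta=\lambda/4$. On the inner sphere I put $B=\min\{1,\rho_1\}$, and on $\partial B_{d_2}(x_0)\cap\Omega$ the barrier vanishes. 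The initial $L^2$ ordering and the $v$-component work exactly as in Lemma~\ref{lem:ball-to-ball-propagation}, with $M=\|v_0\|_\infty$.

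Two technical points remain. First, $G$ is only Lipschitz, not $C^2$. I expect the energy argument of Theorem~\ref{prop:coupled-comparison} still goes through, since it only integrates by parts with $U=(\underline u-u_k)_+$. Alternatively one could smooth the corners where the sphere meets $\partial\Omega$. Second, the barrier's lower bound must cover a neighbourhood of $z$ in $\overline\Omega$ uniformly in $k$, with constants depending only on $r_0,\lambda,\tau$ and not on $z$. Then compactness of $\partial\Omega$, or simply the uniformity in $z$, gives $u_k\ge\rho_2$ on $(\overline\Omega\setminus\Omega_\delta)\times[\tau,T]$. Taking $\rho_{\tau,T}=\min\{\rho_1,\rho_2\}$ and running the argument with $\tau/2$ in place of $\tau$ gives the $R_k$ bound as in the first paragraph. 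I expect the main obstacle to be the geometric step: guaranteeing $(x-x_0)\cdot\nu\ge0$ on the boundary portion uniformly in $z$, compatibly with the annular geometry.
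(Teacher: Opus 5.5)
Your proposal is correct, and your reduction of the $R_k$ bound to a global lower bound for $u_k$ is exactly what the paper does. For the lower bound itself, the paper takes a different route. It uses no local annuli: it composes the one-dimensional profile with the distance function, setting $y=\delta-d(x)$ and $U=e^{-\sigma y}W(y,s)$ on the whole collar $\mathcal C_\delta=\{0<d<\delta\}$, with $L=2\delta$ and $\sigma=\|\Delta d\|_{L^\infty(\mathcal C_\delta)}$. The weight $e^{-\sigma y}$ absorbs the $\Delta y\,\partial_yW$ term just as $h(r)$ absorbs $(N-1)/r$ in Lemma~\ref{lem:weighted-annular-barrier}. On $\{d=\delta\}$ one has $U=B\le u_k$, and on $\partial\Omega$ the identity $\partial_\nu y=1$ gives $\partial_\nu U=e^{-\sigma\delta}(\partial_yW-\sigma W)\le0$ automatically. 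The comparison domain is bounded by two $C^2$ level sets of $d$, so the paper needs no star-shapedness condition, has no corners and no covering, and gets the bound on $\{0\le y\le\delta\}$ in one step. Your version reuses Lemma~\ref{lem:weighted-annular-barrier} verbatim and never differentiates $d$. The price is three extra checks, all of which go through:
\begin{itemize}
\item \emph{Geometry.} Take $\lambda\le r_0/2$ and $d_2=(1+\epsilon)\lambda$ with $\epsilon\le1$. The interior ball at $z$ forces every $x\in\partial\Omega\cap B_{d_2}(x_0)$ to satisfy $(x-z)\cdot n(z)\le|x-z|^2/(2r_0)$, and hence $|x-z|\le\sqrt{6\epsilon}\,\lambda$. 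Together with the Lipschitz continuity of $\nu$, this gives $(x-x_0)\cdot\nu(x)\ge\lambda-C\lambda^2>0$ for small $\lambda$, uniformly in $z$.
\item \emph{Lipschitz domain.} The same estimates, together with the exterior ball condition, show that the sphere meets $\partial\Omega$ transversally, so $G$ is Lipschitz and Green's formula applies. Moreover $(\underline u-u_k)_+=0$ on both spherical pieces. The mixed Dirichlet/Neumann ordering needs the same remark on the boundary term that the paper makes for its collar.
\item \emph{Uniformity.} For $x$ with $d(x)<\delta$, take $z$ to be a nearest boundary point. Then $x=z+d(x)n(z)$, so $|x-x_0|=\lambda-d(x)\in(3\lambda/4,\lambda]$ lies strictly inside $(d_1,d_2)$. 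The barrier bound therefore depends only on $N,\lambda,\epsilon,\tau,B$.
\end{itemize}
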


\begin{proof}
Write $d(x)=\operatorname{dist}(x,\partial\Omega)$.
Since $\partial\Omega$ is $C^2$, choose $\delta>0$ small enough that
$d$ is $C^2$ up to the boundary in
\[
\mathcal C_\delta=\{x\in\Omega:0<d(x)<\delta\},
\]
with $|\nabla d|=1$ in $\mathcal C_\delta$,
$\partial_\nu d=-1$ on $\partial\Omega$, and
$\Delta d$ bounded in $\mathcal C_\delta$.
Choose $\delta$ also so that
$K_\delta:=\{x\in\overline\Omega:d(x)\ge\delta\}$ is nonempty.
Then $K_\delta\Subset\Omega$, and
\[
\partial\mathcal C_\delta=\partial\Omega\cup\Gamma_\delta,
\qquad \Gamma_\delta=\{x\in\Omega:d(x)=\delta\}.
\]

Fix $0<\tau<T<\infty$ and set $t_0=\tau/2$.
By \eqref{eq:main-positive-lower-bound}, choose $B_0>0$, independent
of $k$, such that $u_k\ge B_0$ on $K_\delta\times[t_0,T]$.
Set
\[
B=\min\{1,B_0\},\qquad M=\|v_0\|_{L^\infty(\Omega)},\qquad
\sigma=\|\Delta d\|_{L^\infty(\mathcal C_\delta)}.
\]
Let $W$ and $\mathcal J$ be as in
Lemma~\ref{lem:one-dimensional-memory-barrier}, with $L=2\delta$.
For $x\in\overline{\mathcal C_\delta}$ and $s>0$, define
\[
\begin{aligned}
y(x)&=\delta-d(x),\\
U(x,s)&=e^{-\sigma y(x)}W(y(x),s),\\
\mathcal I(x,s)&=\int_0^s U(x,r)^p\,dr,\\
V_k(x,s)&=\frac{M}{(1+qkM^q\mathcal I(x,s))^{1/q}}.
\end{aligned}
\]

To simplify notation, we write
\[
W=W(y(x),s),\qquad
\mathcal J=\mathcal J(y(x),s),
\]
and use the same convention for the derivatives of $W$.
Since $|\nabla y|=1$, we have
\[
\begin{aligned}
\Delta U-\partial_sU
=e^{-\sigma y}\Big[
&\partial_{yy}W-\partial_sW
+(\Delta y-2\sigma)\partial_yW\\
&+\sigma(\sigma-\Delta y)W
\Big].
\end{aligned}
\]
Since $\Delta y\le\sigma$, $\sigma\ge0$,
$\partial_yW\le0$, and $W\ge0$, we obtain
\[
\Delta U-\partial_sU
\ge e^{-\sigma y}(\partial_{yy}W-\partial_sW).
\]
Moreover, $\mathcal I=e^{-p\sigma y}\mathcal J$.
Thus \eqref{eq:one-dimensional-barrier-inequality} and $\alpha\ge1$ give
\[
\Delta U-\partial_sU
\ge\frac{e^{-\sigma y}W^{p+\alpha}}{q\mathcal J}
\ge\frac{e^{-\alpha\sigma y}W^{p+\alpha}}{q\mathcal J}
=\frac{U^{p+\alpha}}{q\mathcal I}.
\]
By the definition of $V_k$ and $\partial_s\mathcal I=U^p$,
\[
kV_k^q
=
\frac{1}{(kM^q)^{-1}+q\mathcal I}
\le
\frac{1}{q\mathcal I},
\qquad
\partial_sV_k=-kU^pV_k^{q+1}.
\]
Therefore,
\[
\begin{cases}
\partial_sU
\le
\Delta U-kU^{p+\alpha}V_k^q,\\
\partial_sV_k
=
-kU^pV_k^{q+1}.
\end{cases}
\]
Thus $(U,V_k)$ satisfies the differential inequalities
for the lower pair in Theorem~\ref{prop:coupled-comparison}.

Fix $k>0$ and set
\[
\underline u(x,t)=U(x,t-t_0),
\qquad
\underline v(x,t)=V_k(x,t-t_0).
\]
By the definitions of $\underline u$ and $\underline v$ and
Theorem~\ref{thm:general-wellposedness}, the four functions
$\underline u$, $\underline v$, $u_k$, and $v_k$ are nonnegative and
bounded, and have the regularity required in
Theorem~\ref{prop:coupled-comparison}.
On $\Gamma_\delta\times(t_0,T)$, we have
$\underline u=B\le u_k$.
On $\partial\Omega\times(t_0,T)$, $y=\delta$ and
$\partial_\nu y=1$, so
\[
\partial_\nu\underline u
=e^{-\sigma\delta}
\bigl(\partial_yW(\delta,t-t_0)-\sigma W(\delta,t-t_0)\bigr)
\le0=\partial_\nu u_k.
\]
Since $u_k\ge0$, $v_k\le M$, and $0\le\mathcal I(x,s)\le B^ps$,
the definitions and dominated convergence give
\[
\|(\underline u(t)-u_k(t))_+\|_{L^2(\mathcal C_\delta)}
+
\|(v_k(t)-\underline v(t))_+\|_{L^2(\mathcal C_\delta)}
\longrightarrow0
\qquad(t\downarrow t_0).
\]

The proof of Theorem~\ref{prop:coupled-comparison} applies with these
two boundary orderings.
Indeed, let $P=(\underline u-u_k)_+$.
Integration by parts gives the boundary term
\[
\int_{\partial\mathcal C_\delta}
P\,\partial_\nu(\underline u-u_k)\,dS.
\]
On $\Gamma_\delta$, we have $\underline u\le u_k$, and hence $P=0$.
On $\partial\Omega$,
\[
\partial_\nu(\underline u-u_k)\le0,
\]
so that
\[
P\,\partial_\nu(\underline u-u_k)\le0.
\]
Thus the boundary contribution is nonpositive.
The same energy estimate and Gronwall's inequality yield
$\underline u\le u_k$ almost everywhere.
By continuity,
\[
u_k(x,t)\ge U(x,t-t_0)
\quad\text{on }\overline{\mathcal C_\delta}\times(t_0,T].
\]

For $0\le y\le\delta=L/2$ and $t\in[\tau,T]$,
\[
U(x,t-t_0)
\ge
\frac B4
\exp\left(
-\sigma\delta
-\frac{c_w\delta}{\sqrt{\tau/2}}
-\frac{a_w\delta^2}{\tau/2}
\right)
=:\rho_{\tau,T}>0.
\]
The constant $\rho_{\tau,T}$ is independent of $k$ and satisfies
$\rho_{\tau,T}\le B_0$.
Combining the estimates on $\overline{\mathcal C_\delta}$ and
$K_\delta$ proves the first inequality in
\eqref{eq:neumann-global-bounds}.

Applying the first inequality in \eqref{eq:neumann-global-bounds}
with $\tau/2$ in place of $\tau$, choose $\rho>0$,
independent of $k$, such that
\[
u_k\ge\rho
\quad\text{on }\overline\Omega\times[\tau/2,T]
\]
for every $k>0$.
For $(x,t)\in\overline\Omega\times[\tau,T]$,
\[
A_k(x,t)
\ge\int_{\tau/2}^{\tau}u_k(x,s)^p\,ds
\ge\frac{\tau\rho^p}{2}.
\]
Hence \eqref{eq:exact-memory-representation}, extended to the boundary
by continuity, gives
\[
0\le R_k(x,t)
\le\frac{1}{qA_k(x,t)}
\le\frac{2}{q\tau\rho^p}
\quad\text{on }\overline\Omega\times[\tau,T].
\]
The lemma follows.
\end{proof}

We now identify the limit as $k\to\infty$ and prove
Theorem~\ref{thm:memory-limit}.

\begin{proof}[Proof of Theorem~\ref{thm:memory-limit}]
The set $D$ is open because $v_0$ is continuous.
By Lemma~\ref{lem:reaction-rate-monotonicity}, the family
$\{u_k\}_{k>0}$ is nonincreasing with respect to $k$.
Set $M_u=\|u_0\|_{L^\infty(\Omega)}$.
By this monotonicity and \eqref{eq:general-basic-bounds}, the pointwise limit
\[
u(x,t):=\lim_{k\to\infty}u_k(x,t)
\]
exists on $\Omega\times(0,\infty)$ and satisfies
$0\le u\le M_u$.
Moreover, \eqref{eq:main-positive-lower-bound} gives
\[
u(x,t)>0
\qquad
\text{for every }(x,t)\in\Omega\times(0,\infty).
\]

Fix a compact set $K\Subset\Omega$ and $0<\tau<T<\infty$.
Choose bounded open sets $G_0$ and $G_1$ with smooth boundaries such that
\[
K\Subset G_0\Subset G_1\Subset\Omega.
\]
By \eqref{eq:positive-time-reaction-bound} and
\eqref{eq:general-basic-bounds}, the functions $R_k$ and
$R_ku_k^{p+\alpha}$ are uniformly bounded on
$G_1\times(\tau/4,3T)$.
Since \eqref{eq:transformed-critical-system} gives
\[
\partial_tu_k-\Delta u_k=-R_ku_k^{p+\alpha},
\]
the interior parabolic $L^s$-estimates yield
\begin{equation}\label{eq:uniform-interior-parabolic-bound}
\|u_k\|_{W^{2,1}_s(G_0\times(\tau/2,2T))}
\le C_s
\qquad(1<s<\infty),
\end{equation}
where $C_s$ is independent of $k$.
Taking $s>N+2$, the parabolic Sobolev embedding and the
Arzel\`a--Ascoli theorem yield relative compactness in
$C(K\times[\tau,T])$.
Since the pointwise limit is $u$, every subsequential limit equals $u$.
Hence $u_k\to u$ locally uniformly in $\Omega\times(0,\infty)$, with
$u\in C(\Omega\times(0,\infty))$.

Let $A$ be defined by \eqref{eq:A-limit-definition}.
For $(x,t)\in K\times[\tau,T]$ and $0<\varepsilon<\tau$,
\[
\begin{aligned}
|A_k(x,t)-A(x,t)|
&\le
\int_0^\varepsilon
|u_k(x,s)^p-u(x,s)^p|\,ds\\
&\quad+
\int_\varepsilon^t
|u_k(x,s)^p-u(x,s)^p|\,ds\\
&\le
M_u^p\varepsilon
+
T\sup_{(x,s)\in K\times[\varepsilon,T]}
|u_k(x,s)^p-u(x,s)^p|.
\end{aligned}
\]
Hence
\[
\begin{aligned}
\sup_{(x,t)\in K\times[\tau,T]}
|A_k(x,t)-A(x,t)|
&\le
M_u^p\varepsilon\\
&\quad+
T\sup_{(x,s)\in K\times[\varepsilon,T]}
|u_k(x,s)^p-u(x,s)^p|.
\end{aligned}
\]
Since $u_k\to u$ uniformly on $K\times[\varepsilon,T]$,
the second term tends to zero as $k\to\infty$.
Letting $\varepsilon\downarrow0$, we obtain
\begin{equation}\label{eq:uniform-history-convergence}
A_k\longrightarrow A
\quad\text{uniformly on }K\times[\tau,T].
\end{equation}
The positivity and continuity of $u$ imply
\[
A(x,t)>0
\qquad
\text{for every }(x,t)\in\Omega\times(0,\infty).
\]

If $v_0(x)=0$, then
\eqref{eq:exact-memory-representation} gives $R_k(x,t)=0$.
If $v_0(x)>0$, \eqref{eq:exact-memory-representation} and
\eqref{eq:uniform-history-convergence} give
\[
R_k(x,t)
=
\frac{1}{k^{-1}v_0(x)^{-q}+qA_k(x,t)}
\longrightarrow
\frac{1}{qA(x,t)}.
\]
Therefore,
\[
R_k(x,t)\longrightarrow
R(x,t):=\frac{\ind{D}(x)}{qA(x,t)}
\quad
\text{for every }(x,t)\in\Omega\times(0,\infty).
\]
By \eqref{eq:positive-time-reaction-bound} and dominated
convergence, \eqref{eq:Rk-limit-main} holds for every
$1\le s<\infty$.
Since $k^{1/q}v_k=R_k^{1/q}$, the same argument proves
\eqref{eq:scaled-v-limit-main}.
The local uniform convergence of $u_k$ and
\eqref{eq:Rk-limit-main} also yield
\begin{equation}\label{eq:reaction-term-convergence}
R_ku_k^{p+\alpha}
\longrightarrow
\ind{D}\frac{u^{p+\alpha}}{qA}
\quad\text{in }L^1_{\mathrm{loc}}(\Omega\times(0,\infty)).
\end{equation}

For $\phi\in C_c^\infty(\Omega\times(0,\infty))$,
\eqref{eq:transformed-critical-system} yields
\[
\int_{\Omega\times(0,\infty)}
\left(
-u_k\partial_t\phi-u_k\Delta\phi
+R_ku_k^{p+\alpha}\phi
\right)\,dx\,dt=0.
\]
Passing to the limit by local uniform convergence and
\eqref{eq:reaction-term-convergence} proves
\eqref{eq:limit-equation}.

By \eqref{eq:uniform-interior-parabolic-bound} and weak compactness,
we obtain
\[
u\in W^{2,1}_{s,\mathrm{loc}}(\Omega\times(0,\infty))
\qquad\text{for every }1<s<\infty,
\]
and hence also for $s=1$.

It remains to prove convergence up to the boundary and identify
the boundary condition of $u$.
Fix $0<\tau<T<\infty$.
In the Dirichlet case, Lemma~\ref{lem:reaction-rate-monotonicity}
and \eqref{eq:general-basic-bounds} give
\[
0\le u\le u_k\le u_1
\quad\text{in }\Omega\times(0,\infty),\qquad k\ge1.
\]
Since $u_1$ is continuous up to the boundary and satisfies
\eqref{eq:dirichlet-bc}, $u$ extends continuously by zero to
$\partial\Omega\times(0,\infty)$, proving
\eqref{eq:dirichlet-limit-boundary}.
For every $\varepsilon>0$, choose $\delta>0$ such that
\[
u_1(x,t)<\varepsilon
\quad\text{if }\operatorname{dist}(x,\partial\Omega)<\delta,
\quad t\in[\tau,T].
\]
The inequality $0\le u_k-u\le u_1$ controls the difference in this
boundary neighborhood, while local uniform convergence controls it
on the compact set
\[
K_\delta:=\{x\in\overline\Omega:
\operatorname{dist}(x,\partial\Omega)\ge\delta\}.
\]
Thus $u_k\to u$ uniformly on $\overline\Omega\times[\tau,T]$.

In the Neumann case, Lemma~\ref{lem:neumann-boundary-estimates} and
\eqref{eq:general-basic-bounds} imply that $R_ku_k^{p+\alpha}$ is
uniformly bounded on $\Omega\times(\tau/4,2T)$.
Choose $\zeta\in C^\infty([\tau/4,2T])$ such that
\[
0\le\zeta\le1,
\qquad
\zeta=0 \quad\text{on }[\tau/4,3\tau/8],
\qquad
\zeta=1 \quad\text{on }[\tau/2,2T].
\]
Then $w_k=\zeta u_k$ satisfies
\[
\begin{cases}
\partial_tw_k-\Delta w_k
=-\zeta R_ku_k^{p+\alpha}+\zeta'u_k
&\text{in }\Omega\times(\tau/4,2T),\\
\partial_\nu w_k=0
&\text{on }\partial\Omega\times(\tau/4,2T),\\
w_k(\cdot,\tau/4)=0&\text{in }\Omega.
\end{cases}
\]
The global parabolic $L^s$-estimate for the homogeneous Neumann
problem yields
\begin{equation}\label{eq:neumann-global-parabolic-bound}
\|u_k\|_{W^{2,1}_s(\Omega\times(\tau/2,2T))}
\le C_{s,\tau,T}\qquad(1<s<\infty),
\end{equation}
where $C_{s,\tau,T}$ is independent of $k$.
By \eqref{eq:neumann-global-parabolic-bound}, local uniform
convergence, and weak compactness, \eqref{eq:neumann-global-regularity}
holds for $1<s<\infty$, and hence also for $s=1$.

Taking $s>N+2$, the parabolic Sobolev embedding and the
Arzel\`a--Ascoli theorem yield relative compactness in
$C([\tau,T];C^1(\overline\Omega))$.
Together with the local uniform convergence of $u_k$ to $u$, this proves
\eqref{eq:neumann-c1-convergence}, and in particular uniform convergence
on $\overline\Omega\times[\tau,T]$.
Since $\partial_\nu u_k=0$, \eqref{eq:neumann-c1-convergence} yields
\eqref{eq:neumann-limit-boundary}.

It remains to identify the initial trace.
Let $\widehat u$ be the solution of the heat equation with initial datum
$u_0$ and the same homogeneous boundary condition as $u_k$.
Since
\[
\partial_t u_k-\Delta u_k\le0,
\]
the comparison principle gives
\[
0\le u_k(x,t)\le\widehat u(x,t),
\qquad
0\le u(x,t)\le\widehat u(x,t).
\]
If $u_0(x)=0$, then $0\le u(x,t)\le\widehat u(x,t)\to0$ as
$t\downarrow0$.
If $u_0(x)>0$, then, by the continuity of $u_0$, we can choose a ball
$B\Subset\{u_0>0\}$ containing $x$ and
$\xi\in C_c^\infty(B)$ with $0\le\xi\le1$ and $\xi=1$ near $x$.
Since $v_0=0$ on $B$, we have $v_k=0$ on $B$ for all $t\ge0$.
Let $u_B$ be the solution of the Dirichlet heat equation on $B$ with
initial datum $\xi u_0$. By the comparison principle,
\[
u_B\le u_k\le\widehat u
\]
on $B$. Letting first $k\to\infty$ and then $t\downarrow0$, we obtain
\[
u(x,t)\to u_0(x).
\]
Hence, by the dominated convergence theorem,
\[
u(\cdot,t)\to u_0
\qquad\text{in }L^2(\Omega)
\quad\text{as }t\downarrow0,
\]
which proves \eqref{eq:limit-initial-trace}.

We also record the global integrability of the limiting reaction term,
which will be used in Section~\ref{sec:memory-uniqueness}. Integrating
the first equation of \eqref{eq:Pk-intro} over $\Omega\times(0,T)$ and
using the boundary condition, we obtain
\[
\int_0^T\!\int_\Omega
k u_k^{p+\alpha}v_k^q\,dx\,dt
\le \int_\Omega u_0\,dx.
\]
Since
\[
k u_k^{p+\alpha}v_k^q
\longrightarrow
\ind{D}\frac{u^{p+\alpha}}{qA}
\]
pointwise for $t>0$, Fatou's lemma yields
\begin{equation}\label{eq:limit-reaction-global-L1}
\int_0^T\!\int_\Omega
\ind{D}\frac{u^{p+\alpha}}{qA}\,dx\,dt
\le \int_\Omega u_0\,dx
\qquad (T>0).
\end{equation}
The theorem follows.
\end{proof}

\section{Uniqueness for separated initial supports}
\label{sec:memory-uniqueness}

In this section, we prove Theorem~\ref{thm:memory-uniqueness}.
We first establish a comparison principle for \eqref{eq:limit-equation},
which will be used in the uniqueness argument.

\begin{lemma}\label{lem:memory-comparison}
Let $G\subseteq\Omega$ be an open set with $C^2$ boundary, and let
$t_0<t_1$. Let $w_1,w_2$ be nonnegative bounded functions such that
\[
w_1,w_2\in W^{2,1}_2(G\times(s,t_1))
\qquad(t_0<s<t_1).
\]
For $i=1,2$, let
\[
A_i(x,t)=A_i(x,t_0)+\int_{t_0}^t w_i(x,s)^p\,ds,
\]
where $A_i(\cdot,t_0)\in L^\infty(G)$ and
\[
0\le A_1(\cdot,t_0)\le A_2(\cdot,t_0).
\]
Assume that $A_1,A_2>0$ on $(D\cap G)\times(t_0,t_1)$ and
\[
\partial_t w_1-\Delta w_1
\le-\ind{D}\frac{w_1^{p+\alpha}}{qA_1},
\qquad
\partial_t w_2-\Delta w_2
\ge-\ind{D}\frac{w_2^{p+\alpha}}{qA_2}
\]
almost everywhere in $G\times(t_0,t_1)$, where the right-hand sides are
understood to be zero outside $D$. Suppose also that, for some $C>0$,
\begin{equation}\label{eq:memory-comparison-ratio}
w_2^p\le CA_2
\quad\text{on }(D\cap G)\times(t_0,t_1).
\end{equation}
On each connected component of $\partial G$, assume either
\[
w_1\le w_2
\qquad\text{or}\qquad
\partial_\nu w_1\le\partial_\nu w_2
\]
for $t_0<t<t_1$. If
\[
\|(w_1(t)-w_2(t))_+\|_{L^2(G)}
\longrightarrow0
\qquad(t\downarrow t_0),
\]
then
\[
w_1\le w_2,
\qquad
A_1\le A_2
\]
almost everywhere in $G\times(t_0,t_1)$.
\end{lemma}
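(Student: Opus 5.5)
The plan is a Gronwall-type energy argument for the pair
\[
U=(w_1-w_2)_+,\qquad Y=(A_1-A_2)_+,
\]
modelled on the proof of Theorem~\ref{prop:coupled-comparison}. The history variable $A_i$ plays the role of the nondiffusing component. The reaction $f(w,A)=w^{p+\alpha}/(qA)$ is increasing in $w$ and decreasing in $A$. This is the same ``quasi-monotone'' structure that made the coupled comparison work, now with $A$ ordered in the same direction as $w$. Fix $M\ge1$ with $0\le w_1,w_2\le M$.

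\emph{Step 1: the history variable.} Since $\partial_t(A_1-A_2)=w_1^p-w_2^p$ and $A_1(\cdot,t_0)\le A_2(\cdot,t_0)$, we have $Y(\cdot,t_0)=0$. Moreover $(w_1^p-w_2^p)_+\le pM^{p-1}U$, so
\[
Y(x,t)\le pM^{p-1}\int_{t_0}^t U(x,s)\,ds .
\]
Hence
\[
\|Y(t)\|_{L^2(G)}^2\le p^2M^{2p-2}(t-t_0)\int_{t_0}^t\|U(s)\|_{L^2(G)}^2\,ds .
\]

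\emph{Step 2: the $w$-equation.} Subtract the two inequalities, test with $U$ on $(s,t)$, and use the $W^{2,1}_2$ regularity. The boundary term on each component of $\partial G$ is nonpositive, exactly as in Theorem~\ref{prop:coupled-comparison}. On $\{U>0\}\cap D$ split
\[
f(w_2,A_2)-f(w_1,A_1)=\bigl[f(w_2,A_1)-f(w_1,A_1)\bigr]+\bigl[f(w_2,A_2)-f(w_2,A_1)\bigr].
\]
The first bracket is $\le0$ because $w_1>w_2$. The second bracket vanishes where $A_1\le A_2$. Where $A_1>A_2$ it equals
\[
\frac{w_2^{p+\alpha}}{q}\cdot\frac{A_1-A_2}{A_1A_2}.
\]
By \eqref{eq:memory-comparison-ratio} and $A_1>A_2$, this is at most
\[
\frac{C}{q}\,\frac{w_2^{\alpha}}{A_1}\,Y\le \frac{C}{q}\,\frac{w_2^{\alpha}}{A_2}\,Y\le \frac{C^2}{q}\,w_2^{\alpha-p}\,Y .
\]
If $\alpha\ge p$ this gives the bound $\frac{C^2}{q}M^{\alpha-p}Y$. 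Integrating then yields
\[
\frac{d}{dt}\|U\|_{L^2}^2\le C'\int_G UY\,dx .
\]

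\emph{Step 3: closing the estimate.} Set
\[
E(t)=\|U(t)\|_{L^2}^2+\int_{t_0}^t\|U\|_{L^2}^2 .
\]
Combining Steps 1 and 2 gives $E'\le C''E$ on $(t_0,t_1)$. The $L^2$ initial condition gives $E(t_0^+)=0$, so $E\equiv0$. Thus $U=0$, hence $w_1\le w_2$, and then $Y=0$ by Step 1.

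\emph{Main obstacle.} The main obstacle is the coefficient in Step 2 when $\alpha<p$. The ratio \eqref{eq:memory-comparison-ratio} alone then leaves the factor $w_2^{\alpha-p}$, which may blow up where $w_2$ is small. To handle this I would first try to keep the sharper form $Cw_2^\alpha Y/(qA_1)$ and exploit that $Y\le A_1$ and $Y$ is itself an integral of $U$. If that fails, I would work on the set where $w_2\ge\varepsilon$ (where the bound is uniform) and treat its complement by a separate smallness argument, letting $\varepsilon\downarrow0$. A further fallback is to use the lower bound on $A_2$ coming from positivity of $A_2$ on $(D\cap G)\times(t_0,t_1)$, but this must be uniform on $(t_0,t_1)$, not merely pointwise, for the Gronwall argument to close.
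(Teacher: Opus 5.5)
Your argument is complete only when $\alpha\ge p$. The lemma, however, is stated and used for all $p,q\ge1$ and $\alpha\ge1$ (for instance $p=2$, $\alpha=1$). For $1\le\alpha<p$, Step~2 genuinely fails: the factor $w_2^{\alpha-p}$ is unbounded wherever $w_2$ is small. This actually occurs in the Dirichlet application near $\partial\Omega$, where $w_2\sim d(x)$ and $A_2\sim d(x)^p$. So no uniform lower bound on $A_2$ is available either, and the lemma's hypotheses only give pointwise positivity.

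Your other fallbacks do not repair this.
\begin{itemize}
\item Using $Y\le A_1$ together with $A_1>A_2\ge w_2^p/C$ only gives $w_2^\alpha Y/A_1\lesssim Y^{\alpha/p}$. This cannot be improved from those relations alone (take $Y=A_2=w_2^p/C$). It is sublinear, so no Gronwall argument forces $U\equiv0$.
\item Cutting at $\{w_2\ge\varepsilon\}$ produces a Gronwall constant of order $\varepsilon^{\alpha-p}$, which does not survive $\varepsilon\downarrow0$.
\end{itemize}
The loss is already in Step~1. Bounding $(w_1^p-w_2^p)_+$ by $pM^{p-1}U$ throws away the weight $w_1^{p-1}$, and with it the fact that $Y$ is small relative to $A_1$.

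The missing idea is to measure the history difference on the scale $A^{1/p}$. Since $A_i^{1/p}$ is essentially an $L^p(t_0,t)$-norm of $w_i$, Minkowski's inequality and $A_1(\cdot,t_0)\le A_2(\cdot,t_0)$ give $(A_1^{1/p}-A_2^{1/p})_+\le\|P(x,\cdot)\|_{L^p(t_0,t)}$, where $P=(w_1-w_2)_+$. When $A_1>A_2$, the mean value theorem for $r\mapsto r^{-p}$ gives $1/A_2-1/A_1\le pA_2^{-1-1/p}(A_1^{1/p}-A_2^{1/p})$. The resulting coefficient is $w_2^{p+\alpha}A_2^{-1-1/p}=(w_2^p/A_2)^{1+1/p}w_2^{\alpha-1}$, which the ratio hypothesis bounds for every $\alpha\ge1$. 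In your notation, this is the H\"older refinement $Y\le pA_1^{1-1/p}\|U(x,\cdot)\|_{L^p(t_0,t)}$.

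This forces a change in Step~3 as well. For $p>2$, the quantity $\|P(x,\cdot)\|_{L^p(t_0,t)}$ is not linearly controlled by your $L^2$ energy; you would only obtain a sublinear inequality of the type $E'\lesssim E^{1/2+1/p}+E$. The paper closes instead with a sup-norm Gronwall argument:
\begin{itemize}
\item set $m(t)=\mathop{\mathrm{ess\,sup}}_{G\times(t_0,t)}P$ and $\eta(t)=C\int_{t_0}^t(s-t_0)^{1/p}m(s)\,ds$;
\item the reaction difference is then at most $\eta'(t)$ on $\{P>0\}$;
\item an energy estimate for $(w_1-w_2-\eta)_+$ gives $P\le\eta$, hence $m\le\eta$;
\item so $\eta'\le C(t-t_0)^{1/p}\eta$ with $\eta(t_0)=0$, which forces $\eta\equiv0$.
\end{itemize}
When $p\ge2$, an $L^p$-energy estimate using the test function $U^{p-1}$ would also close.
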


\begin{proof}
Set $P=(w_1-w_2)_+$.
Since $w_1\le w_2+P$ and $A_1(\cdot,t_0)\le A_2(\cdot,t_0)$,
Minkowski's inequality gives
\begin{equation}\label{eq:root-history-difference}
(A_1(x,t)^{1/p}-A_2(x,t)^{1/p})_+
\le
\left(\int_{t_0}^t P(x,s)^p\,ds\right)^{1/p}.
\end{equation}

On $D\cap\{P>0\}$, if $A_1\le A_2$, then
\[
\frac{w_2^{p+\alpha}}{qA_2}
-\frac{w_1^{p+\alpha}}{qA_1}\le0.
\]
If $A_1>A_2$, then
\[
\begin{aligned}
\frac{w_2^{p+\alpha}}{qA_2}
-\frac{w_1^{p+\alpha}}{qA_1}
&\le
\frac{w_2^{p+\alpha}}{q}
\left(\frac1{A_2}-\frac1{A_1}\right)\\
&\le
\frac{p\,w_2^{p+\alpha}}{qA_2^{1+1/p}}
\left(A_1^{1/p}-A_2^{1/p}\right)\\
&\le
C\left(\int_{t_0}^t P(x,s)^p\,ds\right)^{1/p}.
\end{aligned}
\]
Here the second inequality follows from the mean value theorem applied to
$r\mapsto r^{-p}$, and
\[
\frac{w_2^{p+\alpha}}{A_2^{1+1/p}}
=
\left(\frac{w_2^p}{A_2}\right)^{1+1/p}
w_2^{\alpha-1}
\le C
\]
by \eqref{eq:memory-comparison-ratio}, $\alpha\ge1$, and the boundedness
of $w_2$. The last inequality follows from
\eqref{eq:root-history-difference}.

Set
\[
m(t)=\mathop{\mathrm{ess\,sup}}_{G\times(t_0,t)}P,
\qquad
\eta(t)=C\int_{t_0}^t(s-t_0)^{1/p}m(s)\,ds.
\]
Then
\[
\partial_t(w_1-w_2)-\Delta(w_1-w_2)
\le\eta'(t)
\quad\text{on }\{P>0\}.
\]
Set $z=w_1-w_2-\eta$.
Since $\eta\ge0$, we have $\{z>0\}\subset\{P>0\}$, and hence
\[
\partial_tz-\Delta z\le0
\quad\text{on }\{z>0\}.
\]
Using the boundary assumptions, multiplying by $z_+$ and integrating
over $G\times(s,t)$, we obtain
\[
\frac12\|z_+(t)\|_{L^2(G)}^2
+\int_s^t\!\int_G|\nabla z_+|^2\,dx\,d\tau
\le
\frac12\|z_+(s)\|_{L^2(G)}^2.
\]
Since $\eta(s)\ge0$,
\[
z_+(s)
\le (w_1(s)-w_2(s))_+.
\]
Thus
\[
\|z_+(t)\|_{L^2(G)}
\le
\|(w_1(s)-w_2(s))_+\|_{L^2(G)}.
\]
Letting $s\downarrow t_0$ and using the initial ordering, we obtain
$z_+(t)=0$. Therefore
\[
P(\cdot,t)\le\eta(t)
\quad\text{a.e. in }G.
\]
Since $\eta$ is nondecreasing, we have $m(t)\le\eta(t)$, and therefore
\[
\eta'(t)\le C(t-t_0)^{1/p}\eta(t),
\qquad
\eta(t_0)=0.
\]
Gronwall's inequality gives $\eta=0$, and hence $w_1\le w_2$ and $A_1\le A_2$.
The lemma follows.
\end{proof}

To apply Lemma~\ref{lem:memory-comparison}, we need to verify
\eqref{eq:memory-comparison-ratio}. The next lemma shows that the ratio
$w^p/A_w$ is bounded away from the initial time.

\begin{lemma}\label{lem:memory-history-ratio}
Let $w$ be either the limit in Theorem~\ref{thm:memory-limit} or a
function satisfying the conditions in
Theorem~\ref{thm:memory-uniqueness}.
For every $0<\tau<T<\infty$, there is $C_{\tau,T}>0$ such that
\begin{equation}\label{eq:memory-positive-time-ratio}
w(x,t)^p\le C_{\tau,T}A_w(x,t)
\qquad(x\in\Omega,\ \tau\le t\le T).
\end{equation}
\end{lemma}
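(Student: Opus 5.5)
The bound follows once we have two ingredients on $\overline\Omega\times[\tau/2,T]$: an upper bound for $w$ and a lower bound for the history $A_w$. Boundedness holds in both settings. For the limit of Theorem~\ref{thm:memory-limit} we have $0\le w\le\|u_0\|_{L^\infty(\Omega)}$. In the uniqueness class, $w\in L^\infty(\Omega\times(0,T))$ is assumed. Hence $w^p\le M^p$ for some $M$, and the claim reduces to showing
\[
\inf_{x\in\Omega,\ \tau\le t\le T}A_w(x,t)>0 .
\]
This infimum need not be positive uniformly up to $\partial\Omega$ in the Dirichlet case, because $w$ vanishes there. So a plain lower bound on $A_w$ will not do. Instead I will prove $A_w(x,t)\ge c\,w(x,t)^p$ directly, by using time monotonicity of the history and a parabolic Harnack/boundary comparison.

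\emph{Interior and Neumann case.} On any compact $K\Subset\Omega$, the continuity and positivity of $w$ on $\overline\Omega\times(0,T]$ give $w\ge\rho_K>0$ on $K\times[\tau/2,T]$. Hence $A_w(x,t)\ge\int_{\tau/2}^{\tau}w^p\,ds\ge\tau\rho_K^p/2$ there, and the ratio is bounded on $K$. In the Neumann case, $w>0$ on $\Omega\times(0,T]$ and $w\in C(\overline\Omega\times(0,T])$. I would check positivity on $\partial\Omega$ by the Hopf lemma. The function $w$ satisfies $\partial_tw-\Delta w+c(x,t)w=0$ with $c=\ind{D}w^{p+\alpha-1}/(qA_w)\ge0$, and $c$ is locally bounded once $A_w$ is bounded below near the point, which can be arranged by a bootstrap from interior points. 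Alternatively, for the limit one simply cites Lemma~\ref{lem:neumann-boundary-estimates}. With positivity on $\partial\Omega$, compactness of $\overline\Omega\times[\tau/2,T]$ gives a uniform lower bound and we are done.

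\emph{Dirichlet case near $\partial\Omega$.} This is the main obstacle. Both $w$ and $A_w$ tend to $0$ at the boundary, so we must compare rates. I would show that $w(x,\cdot)$ cannot increase too fast in time near the boundary:
\[
w(x,t)\le C\,w(x,s)\qquad\text{for } \tau/2\le s\le t\le T,\ x \text{ near }\partial\Omega .
\]
Integrating over $s\in[t-\tau/2,t]$ then gives $A_w(x,t)\ge(\tau/2)C^{-p}w(x,t)^p$. To get this backward-in-time comparability, the plan is as follows. First write $w$ as a supersolution: $w$ is a nonnegative supersolution of the heat equation with zero Dirichlet data, since $\partial_tw-\Delta w\le0$. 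Second, choose an explicit subsolution from below, $w\ge c_1\,d(x)$ on $\Omega\times[\tau/4,T]$. This follows by comparison with a heat-type barrier built from the first Dirichlet eigenfunction $\phi_1\simeq d$; the barrier uses the interior lower bound on a compact set together with zero boundary data. This works for $w$ because $\partial_tw-\Delta w\ge -c\,w$ with $c$ bounded wherever $A_w$ is bounded below, and a local bootstrap can again be used. Third, bound $w$ from above by $w\le\widehat u$. Parabolic boundary regularity for the heat equation with bounded right-hand side (from $u\in W^{2,1}_s$ up to the boundary, or the barrier $\widehat u\le C d$ on $[\tau/2,T]$) gives $w\le C_2\,d(x)$.

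Combining, $w(x,t)\le C_2d(x)\le (C_2/c_1)\,w(x,s)$. This gives the comparability claim, and the constants depend only on $\tau,T$. I expect the delicate point to be the upper bound $w\le Cd$ for a general $w$ in the uniqueness class. Its reaction term is only known to be in $L^1$, but it has the favourable sign, so $w\le\widehat u$ with $\widehat u$ the free heat solution should suffice. The $C^1$ bound on $\widehat u$ up to the boundary for $t\ge\tau/2$ then gives $\widehat u\le Cd$.
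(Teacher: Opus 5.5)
\textbf{Gap: the lower bound $w\ge c\,d$ near $\partial\Omega$.} In the Dirichlet case your structure matches the paper's: an upper bound $w\le\widehat u\le C\,d$, a lower bound $w\ge c\,d$ for $t\ge\tau/2$, and then an integration in time. The problem is how you get the lower bound $w\ge c\,d$, and, in the Neumann case, positivity of $w$ on $\partial\Omega$, for a general $w$ in the uniqueness class.

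Both your eigenfunction barrier and your Hopf argument treat the reaction as a linear potential $c=\ind{D}w^{p+\alpha-1}/(qA_w)$ and need $c$ bounded up to $\partial\Omega$. But $c=\ind{D}w^{\alpha-1}\,w^p/(qA_w)$.
\begin{itemize}
\item For $\alpha=1$, boundedness of $c$ is exactly \eqref{eq:memory-positive-time-ratio}.
\item For $\alpha>1$, it still needs a quantitative lower bound on $w$ near $\partial\Omega$, which is what you are trying to prove.
\item In the Dirichlet case $A_w\to0$ at $\partial\Omega$, so ``$c$ is bounded wherever $A_w$ is bounded below'' says nothing near the boundary.
\item A ``local bootstrap from interior points'' only gives lower bounds whose constants degenerate toward $\partial\Omega$. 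It never gives a uniform $w\ge c\,d$.
\end{itemize}
Citing Lemma~\ref{lem:neumann-boundary-estimates} covers only the limit $u$. The lemma is needed for an arbitrary $w_2$ in the proof of Theorem~\ref{thm:memory-uniqueness}, where it is applied to $c\,w_2(\cdot,\cdot+h)$.

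\textbf{Missing idea: avoid linearizing.} Use a barrier that is a subsolution of the memory equation with respect to its own history. The paper takes
$U_r=\frac{d-r}{\delta-r}e^{-\sigma(\delta-d)}W(\delta-d,t-t_0)$ with $I_r=\int_{t_0}^tU_r^p\,ds$. By Lemma~\ref{lem:one-dimensional-memory-barrier} this satisfies $\partial_tU_r-\Delta U_r\le-U_r^{p+\alpha}/(qI_r)$.

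It is then compared with $w$ through Lemma~\ref{lem:memory-comparison} on the truncated collar $\{r<d<\delta\}$:
\begin{itemize}
\item The ratio hypothesis \eqref{eq:memory-comparison-ratio} is needed only for the supersolution $w$. On this collar it holds trivially, because $w$ is bounded below on the compact set $\{r\le d\le\delta\}$.
\item The conclusion $U_r\le w$ does not depend on the ratio constant, even though that constant blows up as $r\downarrow0$.
\item Letting $r\downarrow0$ therefore gives $w\ge c\,d$ uniformly.
\end{itemize}
In the Neumann case no Hopf lemma is needed. If $w$ vanished at a boundary point at a positive time, then $w\in C([\tau,T];C^1(\overline\Omega))$ and $\partial_\nu w=0$ would force $\nabla w=0$ there. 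Hence $w=o(d)$ along the normal, which contradicts $w\ge c\,d$. Compactness then gives a uniform positive lower bound on $\overline\Omega\times[\tau/2,T]$.
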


\begin{proof}
Let $\widehat u$ be the solution of the heat equation with initial datum
$u_0$ and the same homogeneous boundary condition as $w$.
Since $\partial_t w-\Delta w\le0$, the comparison principle, together with
\[
w(\cdot,t)\to u_0
\quad\text{in }L^2(\Omega)
\quad\text{as }t\downarrow0,
\]
gives
\begin{equation}\label{eq:memory-basic-bound}
0\le w\le\widehat u\le\|u_0\|_{L^\infty(\Omega)}.
\end{equation}

Write
\[
d(x)=\operatorname{dist}(x,\partial\Omega),
\qquad
t_0=\frac{\tau}{2}.
\]
Choose $\delta>0$ so that $d$ is $C^2$ on $\{0<d<2\delta\}$, and set
$\sigma=\|\Delta d\|_{L^\infty(\{0<d<\delta\})}$.
For either choice of $w$, we have
\[
w>0
\quad\text{in }\Omega\times(0,\infty),
\]
and $w$ is continuous for positive times. Hence, choose $0<B\le1$ such that
\[
w\ge B
\quad\text{on }\{d=\delta\}\times[t_0,T].
\]
Let $W$ be the barrier in
Lemma~\ref{lem:one-dimensional-memory-barrier}, with this value of $B$
and $L=2\delta$. For $0<r<\delta/2$ and $r<d(x)<\delta$, define
\[
\begin{aligned}
U_r(x,t)
&=\frac{d(x)-r}{\delta-r}
e^{-\sigma(\delta-d(x))}
W(\delta-d(x),t-t_0),\\
I_r(x,t)
&=\int_{t_0}^t U_r(x,s)^p\,ds.
\end{aligned}
\]
Using $|\nabla d|=1$, $|\Delta d|\le\sigma$, $\partial_y W\le0$, and
$\alpha\ge1$, direct differentiation as in the proof of
Lemma~\ref{lem:neumann-boundary-estimates} gives
\[
\partial_tU_r-\Delta U_r
\le-\frac{U_r^{p+\alpha}}{qI_r}
\le-\ind{D}\frac{U_r^{p+\alpha}}{qI_r}.
\]

For fixed $r>0$, $A_w$ has a positive lower bound on
$\{r\le d\le\delta\}\times[t_0,T]$. Together with
\eqref{eq:memory-basic-bound}, this gives
\eqref{eq:memory-comparison-ratio} for $(w,A_w)$ on this set.
Moreover,
\[
I_r(\cdot,t_0)=0\le A_w(\cdot,t_0),
\]
$U_r\le w$ on $d=r,\delta$, and
\[
U_r(\cdot,t)\longrightarrow0
\quad\text{in }L^2(\{r<d<\delta\})
\quad\text{as }t\downarrow t_0.
\]
Lemma~\ref{lem:memory-comparison} therefore gives $U_r\le w$.
Letting $r\downarrow0$, we obtain
$w(x,t)\ge c_{\tau,T}d(x)$ near $\partial\Omega$.
Since $w>0$ and is continuous on
$\{d\ge\delta\}\times[\tau,T]$, after decreasing $c_{\tau,T}$ if
necessary,
\begin{equation}\label{eq:memory-distance-lower}
w(x,t)\ge c_{\tau,T}d(x)
\qquad
(x\in\Omega,\ \tau\le t\le T).
\end{equation}

In the Dirichlet case, positive-time boundary estimates for
$\widehat u$ and \eqref{eq:memory-basic-bound} give
\begin{equation}\label{eq:memory-dirichlet-upper}
w(x,t)\le C_{\tau,T}d(x)
\qquad
(x\in\Omega,\ \tau\le t\le T).
\end{equation}
Applying \eqref{eq:memory-distance-lower} with $\tau/2$ in place of
$\tau$, for $t\ge\tau$ we obtain
\begin{equation}\label{eq:memory-dirichlet-history-lower}
A_w(x,t)
\ge
\int_{\tau/2}^{\tau}w(x,s)^p\,ds
\ge
c_{\tau,T}d(x)^p.
\end{equation}
Hence \eqref{eq:memory-positive-time-ratio} follows from
\eqref{eq:memory-dirichlet-upper} and
\eqref{eq:memory-dirichlet-history-lower}.

In the Neumann case, if $w$ vanished at a boundary point at positive
time, then $C^1$ regularity and $\partial_\nu w=0$ would give
\[
w(x-\rho\nu(x),t)=o(\rho)
\qquad(\rho\downarrow0),
\]
contradicting \eqref{eq:memory-distance-lower}. Hence, applying
\eqref{eq:memory-distance-lower} with $\tau/2$ in place of $\tau$ and
using compactness, we obtain
\begin{equation}\label{eq:memory-neumann-positive-lower}
w(x,t)\ge c_{\tau,T}>0
\qquad
(x\in\overline\Omega,\ \tau/2\le t\le T).
\end{equation}
Therefore, for $t\ge\tau$,
\begin{equation}\label{eq:memory-neumann-history-lower}
A_w(x,t)
\ge
\int_{\tau/2}^{\tau}w(x,s)^p\,ds
\ge
c_{\tau,T}>0.
\end{equation}
Together with \eqref{eq:memory-basic-bound}, this gives
\eqref{eq:memory-positive-time-ratio}. The lemma follows.
\end{proof}

By \eqref{eq:memory-positive-time-ratio} and
\eqref{eq:memory-basic-bound},
\[
\ind{D}\frac{u^{p+\alpha}}{qA}
=
\frac{\ind{D}}{q}u^\alpha\frac{u^p}{A}
\le C_{\tau,T}
\qquad
\text{on }\Omega\times[\tau,T].
\]
In the Dirichlet case, applying this estimate with $\tau/2$ in place of
$\tau$, we see that $u$ solves a linear parabolic equation with bounded
right-hand side on $\Omega\times(\tau/2,T)$ and zero boundary data.
Standard parabolic estimates therefore give
$u\in W^{2,1}_2(\Omega\times(\tau,T))$.
In the Neumann case, this regularity is already contained in
Theorem~\ref{thm:memory-limit}. Moreover,
\eqref{eq:limit-reaction-global-L1} gives
\[
\ind{D}\frac{u^{p+\alpha}}{qA}
\in L^1(\Omega\times(0,T)).
\]
Hence $u$ satisfies all the conditions in
Theorem~\ref{thm:memory-uniqueness}.

We next compare two solutions for small times in a neighborhood of the
set where $u_0$ is nonzero.

\begin{lemma}\label{lem:short-time-boundary-order}
Assume \eqref{eq:separated-initial-supports}.
There is a bounded open set $\Omega_0$ with smooth boundary such that
\[
\mathcal K\Subset\Omega_0\Subset\Omega\setminus\overline D
\]
and the following holds.
For any two functions $w_1,w_2$ satisfying the conditions in
Theorem~\ref{thm:memory-uniqueness} and any $c>1$, we have
\begin{equation}\label{eq:uniqueness-lateral-order}
w_1(x,t)\le c w_2(x,t+h)
\qquad
(x\in\partial\Omega_0,\ 0<t\le s,\ 0<h<s)
\end{equation}
for all sufficiently small $s>0$.
\end{lemma}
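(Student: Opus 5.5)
The plan is to squeeze both functions between two explicit solutions of the heat equation. We bound $w_1$ from above by the free heat evolution of $u_0$ and $w_2$ from below by a Dirichlet heat evolution on a domain avoiding $D$. We then compare these two heat solutions on $\partial\Omega_0$ for small times through their Gaussian kernels.

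\textbf{Choice of sets.} By \eqref{eq:separated-initial-supports} we have $\operatorname{dist}(\mathcal K,\partial\Omega\cup D)=:4d_0>0$. We fix a smooth bounded open set $G$ with
\[
\{\operatorname{dist}(\cdot,\mathcal K)<3d_0\}\subset G\Subset\Omega\setminus\overline D,
\]
and a smooth $\Omega_0$ with
\[
\{\operatorname{dist}(\cdot,\mathcal K)<d_0\}\subset\Omega_0\subset\{\operatorname{dist}(\cdot,\mathcal K)<2d_0\}.
\]
Thus every $x\in\partial\Omega_0$ satisfies $d_0\le\operatorname{dist}(x,\mathcal K)\le 2d_0$, and $\operatorname{dist}(\partial G,\mathcal K)\ge3d_0$.

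\textbf{Upper bound for $w_1$.} As in \eqref{eq:memory-basic-bound}, we have $w_1\le\widehat u$, where $\widehat u$ is the heat solution in $\Omega$ with datum $u_0$ and the given boundary condition. Let $\Phi$ denote the Gaussian kernel on $\mathbb R^N$. Comparing the Dirichlet or Neumann kernel of $\Omega$ with $\Phi$ (the correction solves the heat equation with data supported at distance $\ge4d_0$ from $\mathcal K$) should give, for $x\in\partial\Omega_0$ and $t\le 2s$,
\[
\widehat u(x,t)\le (\Phi(\cdot,t)*u_0)(x)+Ce^{-(3d_0)^2/(5t)}.
\]

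\textbf{Lower bound for $w_2$.} In $G\times(0,T)$ the reaction term vanishes, so $w_2$ solves the heat equation there with $w_2\ge0$ on $\partial G$. The comparison principle, using the $L^2$ initial trace and $w_2\in W^{2,1}_2$ away from $t=0$, gives $w_2\ge H_G$. Here $H_G$ is the Dirichlet heat solution in $G$ with datum $u_0$. By the same kernel comparison, now with the boundary at distance $\ge 3d_0$,
\[
H_G(x,t)\ge (\Phi(\cdot,t)*u_0)(x)-Ce^{-(3d_0)^2/(5t)}.
\]

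\textbf{Comparison of the Gaussian parts.} Since $u_0\not\equiv0$ is continuous, $u_0\ge\beta>0$ on some small ball inside $\mathcal K$. Hence on $\partial\Omega_0$
\[
(\Phi(\cdot,t)*u_0)(x)\ge c\,e^{-(2d_0+\epsilon)^2/(4t)}.
\]
Because $(2d_0+\epsilon)^2/4<9d_0^2/5$ for $\epsilon$ small, the two exponential errors are $o$ of the Gaussian term as $t\downarrow0$, uniformly on $\partial\Omega_0$. The key monotonicity is that $t\mapsto\Phi(z,t)$ is nondecreasing on $t\le|z|^2/(2N)$. For $x\in\partial\Omega_0$ and $y\in\mathcal K$ we have $|x-y|\ge d_0$, so if $2s\le d_0^2/(2N)$ then
\[
(\Phi(\cdot,t)*u_0)(x)\le(\Phi(\cdot,t+h)*u_0)(x)\qquad (0<t\le s,\ 0<h<s).
\]
Putting the three bounds together,
\[
w_1(x,t)\le(1+o(1))(\Phi(\cdot,t+h)*u_0)(x)\le(1+o(1))\,w_2(x,t+h),
\]
with $o(1)\to0$ uniformly as $s\downarrow0$. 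For a given $c>1$ this yields \eqref{eq:uniqueness-lateral-order} once $s$ is small enough. The error bounds are needed at times $t\le s$ and $t+h<2s$, which both lie in $(0,2s]$.

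\textbf{Main obstacle.} The delicate step is the uniform kernel comparison. We need the errors to decay at an exponential rate $e^{-\kappa/t}$ with $\kappa$ strictly larger than the rate $(2d_0+\epsilon)^2/4$ of the Gaussian lower bound. This is why $\Omega_0$ must lie much closer to $\mathcal K$ than $\partial G$ and $\partial\Omega$ do. I would first derive the error bounds by the maximum principle, dominating the correction by a whole-space Gaussian centred on $\partial G$ (respectively $\partial\Omega$), possibly shrinking the radii of $\Omega_0$ further so that the exponents are separated with room to spare.
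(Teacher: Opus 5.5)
Your strategy differs from the paper's in how the error against the free evolution $\widetilde u=\Phi(\cdot,t)*u_0$ is controlled, and it can be made to work, but two steps need repair.

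The paper stays local. For each $x$ near $\mathcal K$ it compares each $w_i$ two-sidedly with $\widetilde u$ on a cube $Q_x\Subset\Omega\setminus\overline D$ of half-side $r_0/\sqrt N$. There the difference is only known to be bounded by $\|u_0\|_{L^\infty(\Omega)}$ on $\partial Q_x$, and an explicit exponential barrier gives $|w_i-\widetilde u|\le 2N\|u_0\|_{L^\infty(\Omega)}e^{-r_0^2/(4Nt)}$, independently of the boundary condition on $\partial\Omega$. The weak exponent is compensated by placing $\partial\Omega_0$ within $2r_1$ of $\mathcal K$ with $r_1<r_0/(3\sqrt N)$. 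You instead use the one-sided global comparisons $w_1\le\widehat u$ and $w_2\ge H_G$, whose errors are governed by the size of $\widetilde u$ on the far boundaries. This gives better exponents, no dimensional loss and no thin $\Omega_0$, but it brings the condition on $\partial\Omega$ back in through $\widehat u$.

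First, the Gaussian lower bound on $\partial\Omega_0$ is not justified as written. A single ball $B_\epsilon(y_0)$ on which $u_0\ge\beta$ only gives $(\Phi(\cdot,t)*u_0)(x)\ge c\,t^{-N/2}e^{-(|x-y_0|+\epsilon)^2/(4t)}$. For $x\in\partial\Omega_0$ the distance $|x-y_0|$ can be as large as $\operatorname{diam}\mathcal K+2d_0$, so the exponent can exceed $9d_0^2/5$ and your error terms are then no longer negligible. You need a bound that is uniform on $\partial\Omega_0$. Since $\mathcal K=\overline{\{u_0>0\}}$ and $\operatorname{dist}(x,\mathcal K)\le 2d_0$, the continuous function $x\mapsto\int_{B_{2d_0+\epsilon}(x)\cap\Omega}u_0\,dy$ is positive on the compact set $\partial\Omega_0$. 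It is therefore bounded below by some $c_0>0$, which yields $(\Phi(\cdot,t)*u_0)(x)\ge c_0(4\pi t)^{-N/2}e^{-(2d_0+\epsilon)^2/(4t)}$. This is exactly the role of $\int_{B_{3r_1}(x)\cap\Omega}u_0\,dy$ in the paper.

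Second, the kernel comparisons are asserted rather than proved, and a Gaussian centred on $\partial G$ or $\partial\Omega$ is not the right tool.
\begin{itemize}
\item For $H_G$ the maximum principle alone suffices. The difference $\Phi(\cdot,t)*u_0-H_G$ vanishes initially, and on $\partial G$ it equals $\Phi*u_0\le Ct^{-N/2}e^{-9d_0^2/(4t)}$.
\item In the Dirichlet case, $\widehat u\le\Phi(\cdot,t)*u_0$ holds outright.
\item In the Neumann case, $z=\widehat u-\Phi(\cdot,t)*u_0$ carries the flux $-\partial_\nu(\Phi*u_0)$ on $\partial\Omega$, of size $g(t)\le Ct^{-N/2-1}e^{-4d_0^2/t}$, and this must be handled separately. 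Replace $g$ by a smooth nondecreasing majorant and take $\phi\in C^2(\overline\Omega)$ with $\phi\ge0$ and $\partial_\nu\phi\ge1$ on $\partial\Omega$. Then $g(t)(\phi+Kt)$ with $K\ge\sup\Delta\phi$ is a supersolution with larger flux that dominates $\pm z$, so $|z|\le Cg(t)$.
\item Alternatively, bound $w_1$ locally as the paper does, at the price of taking $\Omega_0$ thin, as you anticipated.
\end{itemize}
With these two repairs your argument goes through.
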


\begin{proof}
Let $w_i$, $i=1,2$, be as in the statement.
Choose $r_0>0$ so small that
\[
\{x:\operatorname{dist}(x,\mathcal K)\le3r_0\}
\subset\Omega\setminus\overline D,
\]
and let
\[
\widetilde u(x,t)=(4\pi t)^{-N/2}
\int_\Omega e^{-|x-y|^2/(4t)}u_0(y)\,dy.
\]
For $\operatorname{dist}(x,\mathcal K)<r_0$, set
\[
a=\frac{r_0}{\sqrt N},
\qquad
Q_x=\{y\in\mathbb R^N:|y_j-x_j|<a,\ j=1,\ldots,N\}.
\]
Then $Q_x\Subset\Omega\setminus\overline D$, so $w_i$ and
$\widetilde u$ solve the heat equation in $Q_x$, have the same
$L^2$ initial trace, and satisfy
\[
0\le w_i,\widetilde u\le\|u_0\|_{L^\infty(\Omega)}.
\]

For $\lambda>0$, set
\[
\Phi_\lambda(y,t)
=
\|u_0\|_{L^\infty(\Omega)}e^{\lambda^2t}
\sum_{j=1}^N
\left(
e^{-\lambda(a-y_j+x_j)}
+
e^{-\lambda(a+y_j-x_j)}
\right).
\]
Since $\Phi_\lambda$ solves the heat equation and
\[
\Phi_\lambda\ge
\|u_0\|_{L^\infty(\Omega)}
\ge |w_i-\widetilde u|
\quad\text{on }\partial Q_x,
\]
the comparison principle gives
\[
|w_i(y,t)-\widetilde u(y,t)|
\le\Phi_\lambda(y,t).
\]
Evaluating at $y=x$ and choosing $\lambda=a/(2t)$, we obtain
\begin{equation}\label{eq:empty-region-heat-error}
|w_i(x,t)-\widetilde u(x,t)|
\le
2N\|u_0\|_{L^\infty(\Omega)}
e^{-r_0^2/(4Nt)}.
\end{equation}

Choose $0<r_1<r_0/(3\sqrt N)$ and a smooth open set $\Omega_0$ with
\[
\{x:\operatorname{dist}(x,\mathcal K)\le r_1\}
\Subset\Omega_0
\Subset\{x:\operatorname{dist}(x,\mathcal K)<2r_1\}.
\]
For $x\in\partial\Omega_0$, the continuous function
$\int_{B_{3r_1}(x)\cap\Omega}u_0(y)\,dy$ has a positive minimum.
Consequently, for some $c_0>0$,
\[
\begin{aligned}
\widetilde u(x,t)
&\ge
(4\pi t)^{-N/2}
\int_{B_{3r_1}(x)\cap\Omega}
e^{-|x-y|^2/(4t)}u_0(y)\,dy\\
&\ge
c_0(4\pi t)^{-N/2}e^{-9r_1^2/(4t)}
\qquad(x\in\partial\Omega_0).
\end{aligned}
\]
Together with \eqref{eq:empty-region-heat-error}, this gives
\[
\frac{|w_i(x,t)-\widetilde u(x,t)|}{\widetilde u(x,t)}
\longrightarrow0
\]
uniformly for $x\in\partial\Omega_0$ as $t\downarrow0$. Hence
$w_i/\widetilde u\to1$
uniformly on $\partial\Omega_0$.

Since $|x-y|\ge r_1$ for $x\in\partial\Omega_0$ and
$y\in\mathcal K$, differentiation of the Gaussian shows that
$t\mapsto\widetilde u(x,t)$ is nondecreasing for
$0<t\le r_1^2/(2N)$.
Thus, for sufficiently small $s>0$,
\[
w_1(x,t)\le\sqrt c\,\widetilde u(x,t)
\le\sqrt c\,\widetilde u(x,t+h)\le c w_2(x,t+h)
\]
when $x\in\partial\Omega_0$, $0<t\le s$, and $0<h<s$. The lemma follows.
\end{proof}

\begin{proof}[Proof of Theorem~\ref{thm:memory-uniqueness}]
Let $w_1,w_2$ satisfy the conditions in the theorem, and write
$A_i=A_{w_i}$ for $i=1,2$.
Fix $T>0$ and $c>1$.
Take $\Omega_0$ from Lemma~\ref{lem:short-time-boundary-order}, and
set $\Omega_1=\Omega\setminus\overline{\Omega_0}$.
Choose $0<s<T/2$ so that \eqref{eq:uniqueness-lateral-order} holds.
For $0<h<s$, set
\[
\overline w(x,t)=c w_2(x,t+h),
\qquad
\overline A(x,t)=c^p A_2(x,t+h).
\]
Then $\partial_t\overline A=\overline w^p$ and
\[
\partial_t\overline w-\Delta\overline w
+\ind{D}\frac{\overline w^{p+\alpha}}{q\overline A}
=(c^\alpha-c)\ind{D}
\frac{w_2(x,t+h)^{p+\alpha}}{qA_2(x,t+h)}\ge0.
\]
On $\Omega_1$, $w_1$ has zero initial trace, and
$A_1(\cdot,0)=0\le\overline A(\cdot,0)$.
The boundary ordering on $\partial\Omega_0$ follows from
\eqref{eq:uniqueness-lateral-order}. On $\partial\Omega$, both functions
satisfy the same homogeneous boundary condition.
For fixed $h>0$, Lemma~\ref{lem:memory-history-ratio} gives
$\overline w^p\le C_h\overline A$ on $\Omega_1\times(0,s)$.
Lemma~\ref{lem:memory-comparison} therefore yields
$w_1(x,t)\le c w_2(x,t+h)$ there.
Letting $h\downarrow0$ gives $w_1\le c w_2$ on
$\Omega_1\times(0,s]$.

On $\Omega_0$, $w_1$ and $c w_2$ solve the heat equation.
Their initial traces are $u_0$ and $c u_0$, respectively, and their
boundary values are ordered by the preceding inequality and continuity.
The comparison principle gives $w_1\le c w_2$ also on
$\Omega_0\times(0,s]$.
Hence
\[
w_1(\cdot,s)\le c w_2(\cdot,s),
\qquad A_1(\cdot,s)\le c^p A_2(\cdot,s).
\]
Apply Lemma~\ref{lem:memory-comparison} once more on
$\Omega\times(s,T)$ to $(w_1,A_1)$ and $(c w_2,c^p A_2)$,
using Lemma~\ref{lem:memory-history-ratio}.
We obtain $w_1\le c w_2$ on $\Omega\times(0,T]$.
Letting $c\downarrow1$ and interchanging $w_1,w_2$ proves uniqueness.
Since $T>0$ was arbitrary, the theorem follows.
\end{proof}

\section*{Acknowledgments}
This work was supported in part by JSPS KAKENHI Grant Numbers JP23H00085 and JP26K17020.


\end{document}